\newtheorem{theorem}{Theorem}[section]
\newtheorem{proposition}[theorem]{Proposition}
\newtheorem{lemma}[theorem]{Lemma}
\theoremstyle{definition}
\newtheorem{definition}[theorem]{Definition}
\theoremstyle{remark}
\newtheorem{remark}[theorem]{Remark}
\newcommand{\N}{\mathbb{N}}
\newcommand{\Z}{\mathbb{Z}}
\newcommand{\C}{\mathbb{C}}
\newcommand{\cH}{\cal H}
\newcommand{\Zjs}{{\cal Z}}
\DeclareMathOperator{\id}{id}
\DeclareMathOperator{\Ad}{Ad}
\begin{document}
\title{Strict comparison and $\Zjs$-absorption of nuclear C$^*$-algebras}
\author{Hiroki Matui \\
Graduate School of Science \\
Chiba University \\
Inage-ku, Chiba 263-8522, Japan 
\and 
Yasuhiko Sato \\
Graduate School of Science \\
Kyoto University \\
Sakyo-ku, Kyoto 606-8502, Japan}
\date{}

\maketitle

\begin{abstract}   
For any unital separable simple infinite-dimensional nuclear C$^*$-algebra with finitely many extremal traces, we prove that $\Zjs$-absorption, strict comparison, and property (SI) are equivalent. We also show that 
any unital separable simple nuclear C$^*$-algebra with tracial rank zero 
is approximately divisible, and hence is $\mathcal{Z}$-absorbing. 

\end{abstract}

\section{Introduction}\label{Sec1}

X. Jiang and H. Su \cite{JS} constructed 
a unital separable simple infinite-dimensional nuclear C$^*$-algebra $\mathcal{Z}$, 
called the Jiang-Su algebra, 
whose $K$-theoretic invariant is isomorphic to 
that of the complex numbers. 
The Jiang-Su algebra has recently become to play a central role 
in Elliott's classification program for nuclear C$^*$-algebras. 
We say that a unital C$^*$-algebra is $\mathcal{Z}$-absorbing 
if $A\cong A\otimes\mathcal{Z}$. 
 H. Lin, Z. Niu and W. Winter proved that 
certain $\mathcal{Z}$-absorbing C$^*$-algebras are classified 
by their ordered $K$-groups \cite{LN, WinClassification}. 
Indeed, all classes of unital simple nuclear C$^*$-algebras 
for which Elliott's classification conjecture have been confirmed 
consist of $\mathcal{Z}$-absorbing algebras. 
One may view $\mathcal{Z}$ as being the stably finite analogue 
of the Cuntz algebra $\mathcal{O}_\infty$. 
W. Winter also showed that $\mathcal{Z}$ is the initial object 
in the category of strongly self-absorbing C$^*$-algebras 
\cite{WinStrSelfAbsorbing}. 

In view of this, 
it is desirable to characterize $\mathcal{Z}$-absorbing C$^*$-algebras 
in various manners. 
In 2008, A. S. Toms and W. Winter conjectured that 
the properties of strict comparison, finite nuclear dimension, 
and $\mathcal{Z}$-absorption are equivalent 
for unital separable simple infinite-dimensional nuclear C$^*$-algebras 
(see \cite{Toms,WZ} for example). 
M. R\o rdam proved that 
$\mathcal{Z}$-absorption implies strict comparison 
for unital simple exact C$^*$-algebras \cite{Ror}. 
W. Winter showed that 
any unital separable simple infinite-dimensional C$^*$-algebra 
with finite nuclear dimension 
is $\mathcal{Z}$-absorbing \cite{WinNucDimension}. 
In the present paper 
we provide another partial answer to the conjecture above. 
Namely, it will be shown that 
strict comparison implies $\mathcal{Z}$-absorption 
under the assumption that the algebra has finitely many extremal traces.

The following is the main result of this paper.
\begin{theorem}\label{ThmMain}
Let $A$ be a unital separable simple infinite-dimensional nuclear C$^*$-algebra with finitely many extremal traces. Then the following are equivalent:
\begin{enumerate}
\item $A\otimes\Zjs\cong A$.
\item $A$ has strict comparison.
\item Any completely positive map from $A$ to $A$ can be excised in small central sequences.
\item $A$ has property (SI).
\end{enumerate}
\end{theorem}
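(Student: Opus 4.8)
The plan is to establish the cyclic chain of implications (i) $\Rightarrow$ (ii) $\Rightarrow$ (iii) $\Rightarrow$ (iv) $\Rightarrow$ (i). Two of the links are inexpensive. The implication (i) $\Rightarrow$ (ii) is R\o rdam's theorem that $\mathcal{Z}$-absorption forces strict comparison for every unital simple exact $C^*$-algebra \cite{Ror}, together with the fact that nuclear algebras are exact. For (iii) $\Rightarrow$ (iv) one only unpacks the definitions: given central sequences $(e_n)$ and $(f_n)$ of positive contractions with $\max_{\tau}\tau(e_n)\to 0$ and $\inf_m\liminf_n\min_{\tau}\tau(f_n^m)>0$, apply (iii) to the identity map $A\to A$; the excising central sequence $(s_n)$ it produces already satisfies $\|s_n^*s_n-e_n\|\to 0$ and $\|f_ns_n-s_n\|\to 0$, so discarding the intertwining relation leaves precisely property (SI). Thus the substance of the theorem lies in (ii) $\Rightarrow$ (iii) and (iv) $\Rightarrow$ (i).

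For (ii) $\Rightarrow$ (iii), fix a completely positive map $\phi\colon A\to A$, a finite set $F\subseteq A$, and central sequences $(e_n)$, $(f_n)$ of positive contractions as in the definition of excision in small central sequences. Using nuclearity through the completely positive approximation property, factorise $\phi$, up to a small error on $F$, as $\psi\circ\eta$ with $\eta\colon A\to M_k$ and $\psi\colon M_k\to A$ completely positive contractive. The excising sequence is then to be assembled, inside the hereditary subalgebra $\overline{f_nAf_n}$, out of an approximately central "matrix-unit-like" system of size $k$ carrying the values $\psi(e_{ij})$, multiplied by a square root of $e_n$. What this construction requires is a Cuntz subequivalence, valid for all large $n$, of a $k$-fold amplification of $e_n$ into $\overline{f_nAf_n}$; and this is exactly what strict comparison supplies, since $\max_{\tau}\tau(e_n)\to 0$ while $\min_{\tau}\tau(f_n^m)$ stays bounded away from $0$, so $k$ orthogonal copies of $e_n$ are dominated in every trace by a functional-calculus element of $f_n$. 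Here the hypothesis of \emph{finitely many} extremal traces is essential: only finitely many numbers $\tau_i(e_n)$, $\tau_i(f_n)$ need to be estimated, so one threshold works uniformly in $n$ and the requisite elements can be produced simultaneously and approximately centrally.

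For (iv) $\Rightarrow$ (i), let $\tau_1,\dots,\tau_m$ be the extremal traces. Since $A$ is nuclear (hence amenable), simple, and infinite-dimensional, each tracial weak closure $\pi_{\tau_i}(A)''$ is the hyperfinite $\mathrm{II}_1$ factor $R$; set $N=\bigoplus_{i=1}^m\pi_{\tau_i}(A)''$. Using the central-sequence-algebra machinery, one identifies the quotient $(A_\omega\cap A')/J_A$ of the central sequence algebra by its trace-kernel ideal $J_A$ with $N^\omega\cap N'$, the relative commutant in the tracial ultrapower. This von Neumann algebra contains $M_k$ unitally for every $k$; since cones $C_0(0,1]\otimes M_k$ are projective, completely positive contractive order zero maps out of $M_k$ lift along the quotient map, so such a unital embedding lifts to a completely positive contractive order zero map $\varphi\colon M_k\to A_\omega\cap A'$ with $1-\varphi(1_k)\in J_A$ and $\tau_i^\omega(\varphi(e_{11})^\ell)=1/k$ for all $\ell$ and $i$. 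Property (SI), applied to the small sequence representing $1-\varphi(1_k)$ and the large sequence representing $\varphi(e_{11})$, then yields $s\in A_\omega\cap A'$ with $s^*s=1-\varphi(1_k)$ and $\varphi(e_{11})s=s$, whence $1-\varphi(1_k)\sim_{\mathrm{Cu}}ss^*\precsim\varphi(e_{11})$ in $A_\omega\cap A'$. By the R\o rdam--Winter characterisation, this Cuntz subequivalence gives a unital homomorphism from a prime dimension-drop algebra into $A_\omega\cap A'$, hence $A\cong A\otimes\mathcal{Z}$.

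The main obstacle I expect is the implication (ii) $\Rightarrow$ (iii): converting strict comparison, which only speaks about Cuntz comparison of positive elements via traces, into excision data for an \emph{arbitrary} completely positive self-map forces one to combine the completely positive approximation property with those comparison estimates while keeping everything approximately central, uniformity being supplied by the finite extreme boundary. The identification of $(A_\omega\cap A')/J_A$ with $N^\omega\cap N'$ used in (iv) $\Rightarrow$ (i) is also technically substantial, but is of the kind already developed in the literature on central sequence algebras; granting it and property (SI), the remaining passage to $\mathcal{Z}$-stability is, as sketched, essentially formal.
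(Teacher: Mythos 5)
Your architecture coincides with the paper's: the cycle (i)$\Rightarrow$(ii)$\Rightarrow$(iii)$\Rightarrow$(iv)$\Rightarrow$(i), with (i)$\Rightarrow$(ii) quoted from \cite{Ror} and (iii)$\Rightarrow$(iv) obtained by excising $\id_A$ (note you still owe the short computation that the resulting $(s_n)_n$ is a \emph{central} sequence, since Definition \ref{DefExcision} does not assert this, while property (SI) requires it; the paper checks it by expanding $\|[s_n,a]\|^2$). Your (iv)$\Rightarrow$(i) is in substance the paper's argument in slightly different packaging: instead of passing to $(A_\omega\cap A')/J_A\cong N^\omega\cap N'$ and lifting via projectivity of cones, the paper pulls approximately central matrix units back from $A''\cong\mathcal{R}^{\oplus N}$ in trace norm using \cite[Lemma 2.1]{Sat2}, orthogonalizes them in norm by a Kishimoto-type trick (Lemma \ref{LemOrthogonality}), and gets the c.p.c.\ order zero map $\psi:M_k\to A_\infty$ with $\tau(\psi(e)^m)\approx 1/k$ directly (Lemma \ref{LemOrderzeroCP}); after (SI) both routes finish with the dimension-drop characterization (\cite[Proposition 2.1]{Sat}, \cite[Proposition 2.2]{TW} in the paper). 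That part is fine.

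The genuine gap is (ii)$\Rightarrow$(iii), exactly the step you flag as the obstacle, and your sketch is missing the idea that makes it work while misattributing the role of the hypothesis. First, after factoring $\varphi\approx\psi\circ\eta$ through $M_N$, the downward half $\eta:A\to M_N$ must be implemented by elements of $A$: the paper replaces the relevant state by a pure state (Glimm), excises it \`a la Akemann--Anderson--Pedersen \cite{AAP} by a positive norm-one $a$ with $\|a(d_i^*xd_j-\omega(d_i^*xd_j))a\|$ small, having first used Voiculescu's theorem and Kadison transitivity to write $\eta(x)\approx[\omega(d_i^*xd_j)]_{i,j}$ (Lemma \ref{LemCPmap}); the excising elements are then $s_n=\sum_i d_ia\tilde f_n^{1/2}r_nc_i$, where strict comparison is used only through Lemma \ref{LemBddrn} to produce $r_n$ with $r_n^*\tilde f_n^{1/2}a^2\tilde f_n^{1/2}r_n\to e_n$ \emph{and} $\limsup\|r_n\|\le 1$. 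Your ``matrix-unit-like system carrying $\psi(e_{ij})$ times $e_n^{1/2}$'' never explains how the dependence of $s_n^*xs_n$ on $x$ through $\eta$ is to be produced, and a bare Cuntz subequivalence of a $k$-fold amplification of $e_n$ into $\overline{f_nAf_n}$ yields unbounded comparison elements and no control of $s_n^*xs_n$ for $x\neq 1$. Second, the finitely-many-extremal-traces hypothesis is not there to make ``one threshold work uniformly in $n$'': the quantities $\max_\tau\tau(e_n)$ and $\min_\tau\tau(f_n^m)$ are already uniform over the compact set $T(A)$ as the definitions are stated. Its actual role is structural: it forces $A''$ to be a finite direct sum of copies of the hyperfinite II$_1$ factor, which is what allows the construction of approximately central positive contractions $e_1,\dots,e_k$ with $e_ie_j\approx 0$ and $\tau(e_i^m)\approx 1/k$ uniformly in $\tau$ (Lemma \ref{LemOrderzeroCP}); these are needed in (ii)$\Rightarrow$(iii) to cut $(f_n)_n$ into $N$ mutually orthogonal central sequences each of trace bounded below (Lemma \ref{LemDecompositionfn}), so that the rank-$N$ approximation $\varphi=\sum_l\varphi_l$ can be excised summand by summand, the cross terms $s_{l,n}^*as_{l',n}$ vanishing by orthogonality; the same lemma powers (iv)$\Rightarrow$(i). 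Without the pure-state excision mechanism, the norm bound on the comparison elements, and this divisibility/orthogonalization input, your construction for a general completely positive map does not close.
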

Here, we recall the definition of strict comparison. 
In this paper we denote by $A_+$ the positive cone of $A$ and 
by $T(A)$ the set of tracial states on $A$. 
We define the dimension function $d_{\tau}$ associated to $\tau\in T(A)$ 
by $d_{\tau}(a)=\lim_{n\rightarrow\infty}\tau(a^{1/n})$ for $a\in M_k(A)_+$, 
where $\tau$ is regarded as an unnormalized trace on $M_k(A)$. 
We say that a separable nuclear C$^*$-algebra $A$ has {\it strict comparison} 
if for $a,b\in M_k(A)_+$ with $d_{\tau}(a)<d_{\tau}(b)$ for any $\tau\in T(A)$ 
there exist $r_n\in M_k(A)$, $n\in\N$ such that $r_n^*br_n\rightarrow a$. 
The definition of excision in small central sequences 
is given in Definition \ref{DefExcision}, 
and the definition of property (SI) is given in Definition \ref{DefSI}. 
As mentioned above, 
(i)$\Rightarrow$(ii) was proved by M. R\o rdam \cite[Corollary 4.6]{Ror} 
without assuming that $A$ has finitely many extremal tracial states. 
The implication (iii)$\Rightarrow$(iv) is immediate from the definitions 
and do not need the assumption of finitely many extremal traces. 
We use the full assumption on $A$ 
for the implications (ii)$\Rightarrow$(iii) and (iv)$\Rightarrow$(i). 
It will be also shown that 
(i) implies (iii) and (iv) 
without the assumption of finitely many extremal tracial states 
in Theorem \ref{ThmSI}. 
In Section 5, using the same method, 
we shall show approximate divisibility of 
unital separable simple nuclear C$^*$-algebras with tracial rank zero.

The main technical device in this paper is 
excision of completely positive maps. 
In \cite{AAP}, C. A. Akemann, J. Anderson and G. K. Pedersen proved that 
any pure state on a C$^*$-algebra can be excised 
by positive norm one elements. 
By using their result, 
E. Kirchberg obtained a Stinespring dilation type theorem 
for unital nuclear completely positive maps 
from a unital purely infinite simple C$^*$-algebra to itself. 
This theorem is one of technical cornerstones 
in the proof of Kirchberg's celebrated embedding theorem 
for exact C$^*$-algebras \cite{Kir,KP}. 
In this article, by using the result of \cite{AAP}, 
we will establish a similar `dilation' type result 
for completely positive maps in the setting of stably finite C$^*$-algebras. 
To this end, we have to work with central sequences 
and to take into account the values of traces on them 
(Definition \ref{DefExcision}). 

The other ingredient in this paper is property (SI). 
The idea of property (SI) originates with A. Kishimoto 
(see \cite[Lemma 3.6]{K98JOT}). 
Using it, he proved that 
certain automorphisms of AT algebras have the Rohlin property. 
(See \cite{M10CMP,SatIJM} for further developments.) 
In \cite{MS,Sat}, property (SI) was used to show 
$\mathcal{Z}$-absorption of crossed products by strongly outer actions. 
The main theorem in the present paper implies that 
this property is not so restrictive but is shared 
by `many' stably finite nuclear C$^*$-algebras. 

We recall the notion of central sequence algebras of C$^*$-algebras. 
Let $A$ be a separable C$^*$-algebra. 
Set 
\[
A^\infty=\ell^\infty(\N,A)/\{(a_n)_n\in\ell^\infty(\N,A)\mid
\lim_{n\to\infty}\lVert a_n\rVert=0\}. 
\]
We identify $A$ with the C$^*$-subalgebra of $A^\infty$ 
consisting of equivalence classes of constant sequences. 
We let 
\[
A_\infty=A^\infty\cap A'
\]
and call it the central sequence algebra of $A$. 
A sequence $(x_n)_n\in\ell^\infty(\N,A)$ is called a central sequence 
if $\lVert[a,x_n]\rVert\to0$ as $n\to\infty$ for all $a\in A$. 
A central sequence is a representative of an element in $A_\infty$.

\section{Excision in small central sequences}\label{Sec2}
In this section, we prove Proposition \ref{PropMain}, which plays an important role in Section \ref{Sec3}. 
 
\begin{definition}\label{DefExcision}
Let $A$ be a separable C$^*$-algebra with $T(A)\neq \emptyset$, and let $\varphi:A\rightarrow A$ be a completely positive map. 
We say that {\it $\varphi$ can be excised in small central sequences} when  for any central sequences 
$(e_n)_n$ and $(f_n)_n$ of positive contractions in $A$ satisfying 
\[ \lim_{n\rightarrow\infty}\max_{\tau\in T(A)} \tau(e_n)=0, \quad \lim_{m\rightarrow\infty}\liminf_{n\rightarrow\infty} \min_{\tau\in T(A)} \tau (f_n^m) > 0, \]
there exist $s_n\in A$, $n\in\N$ such that 
\[\lim_{n\rightarrow\infty} \| s_n^*as_n- \varphi(a) e_n\| =0,{\rm \ for\ any\ } a\in A, \quad \lim_{n\rightarrow\infty} \|f_ns_n-s_n\| =0.\]    

\end{definition}

The following proposition is our main tool for the proof of (ii)$\Rightarrow$(iii) of Theorem \ref{ThmMain}. This may be thought of as a stably finite analogue  of Kirchberg's Stinespring type theorem \cite{Kir} (see also \cite[Proposition 1.4]{KP}).

\begin{proposition}\label{PropMain}
Let $A$ be a unital separable simple infinite-dimensional C$^*$-algebra 
with $T(A)\neq\emptyset$. 
Suppose that $A$ has strict comparison. 
Let $\omega$ be a state on $A$ and let $c_i$, $d_i\in A$, $i=1,2,\dots,N$. 
Then the completely positive map $\varphi:A\rightarrow A$ defined by 
\[
\varphi(a)=\sum_{i,j=1}^N\omega(d_i^*ad_j)c_i^*c_j, \quad a\in A,
\]
can be excised in small central sequences.
\end{proposition}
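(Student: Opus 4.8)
\textit{The plan} is to combine the Akemann--Anderson--Pedersen excision theorem, which lets a state be ``read off'' by conjugation by a single positive element, with strict comparison, which supplies the Cuntz--theoretic room needed to place the outcome of that excision where the central sequences $(e_n)$ and $(f_n)$ require; this is the stably finite counterpart of Kirchberg's Stinespring--type factorization in the purely infinite case. First, normalize the data. Since $\|\varphi(a)e_n-\varphi(a)(e_n-\delta_n)_+\|\le\|\varphi(a)\|\,\delta_n$, replacing $e_n$ by $(e_n-\delta_n)_+$ with $\delta_n:=\big(\max_\tau\tau(e_n)\big)^{1/2}$ alters the target negligibly, leaves $(e_n)$ a central sequence of positive contractions, and gives $\max_{\tau\in T(A)}d_\tau(e_n)\le\max_\tau\tau(e_n)/\delta_n\to0$; so assume $\max_\tau d_\tau(e_n)\to0$. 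As $\tau(f_n^m)$ is nonincreasing in $m$, the hypothesis $\lim_m\liminf_n\min_\tau\tau(f_n^m)>0$ yields $\gamma>0$ with $\liminf_n\min_\tau\tau(f_n^m)\ge\gamma$ for \emph{every} $m$; cutting off the spectrum of $f_n$ below $t_n:=(\gamma/4)^{1/m(n)}$ for a slowly increasing $m(n)$ produces positive contractions $\hat f_n\in C^*(f_n)$ with $\|f_n\hat f_n-\hat f_n\|\to0$ and $\min_\tau d_\tau(\hat f_n)\ge\gamma/4$ for large $n$; hence any $(x_n)$ with $\hat f_nx_n\approx x_n$ also has $f_nx_n\approx x_n$.

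\textit{Excising $\omega$ and constructing $s_n$.} As $A$ is unital, simple, and infinite--dimensional it is not of type I, so its pure states are weak$^*$--dense and by \cite{AAP} $\omega$ can be excised: there are positive norm--one $h_k\in A$ with $\|h_kah_k-\omega(a)h_k^2\|\to0$ for all $a\in A$. Fix finite sets $F_1\subseteq F_2\subseteq\cdots$ with dense union and pick $k(n)\to\infty$ slowly so that the excision error over $F_n$ is $<1/n$ and $\min_\tau d_\tau(h_{k(n)})>\max_\tau d_\tau(e_n)$ --- possible since the right-hand side tends to $0$ while $\min_\tau d_\tau(h_k)>0$ for each $k$. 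The element $b_n:=\sum_{j=1}^N c_je_nc_j^*$ satisfies $\max_\tau d_\tau(b_n)\le N\max_\tau d_\tau(e_n)\to0$, so by strict comparison $b_n\precsim h_{k(n)}^2$ and $b_n\precsim\hat f_n$; combining these subequivalences with R\o rdam's lemma (which bounds the norms of the implementing elements) one produces $z^{(n)}_1,\dots,z^{(n)}_N\in A$ for which $(h_{k(n)}^2)^{1/2}z^{(n)}_j$ is bounded, $\hat f_n\big(d_jh_{k(n)}z^{(n)}_j\big)\approx d_jh_{k(n)}z^{(n)}_j$, and $\big(z^{(n)}_i\big)^{*}h_{k(n)}^2z^{(n)}_j\approx e_n^{1/2}c_i^{*}c_je_n^{1/2}$ for all $i,j$. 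With $s_n:=\sum_{j=1}^N d_jh_{k(n)}z^{(n)}_j$ the excision property applied to $d_i^{*}ad_j$ gives
\[
s_n^{*}as_n=\sum_{i,j=1}^N \big(z^{(n)}_i\big)^{*}h_{k(n)}\,d_i^{*}ad_j\,h_{k(n)}z^{(n)}_j\ \longrightarrow\ \sum_{i,j=1}^N\omega(d_i^{*}ad_j)\,e_n^{1/2}c_i^{*}c_je_n^{1/2}\ \longrightarrow\ \varphi(a)e_n,
\]
and $f_ns_n\approx s_n$ follows from the $\hat f_n$--support of the summands together with the asymptotic centrality of $f_n$; a diagonal interleaving of all the choices makes every error tend to $0$.

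\textit{Main obstacle.} The crux is the joint construction of the $z^{(n)}_j$ in the previous step. The excising element $h_{k(n)}$ is concentrated near the part of $A$ that carries $\omega$, which --- $A$ being merely stably finite --- may be trace--negligible and disjoint from the region where $f_n$ is close to $1$; yet $f_ns_n\approx s_n$ forces $s_ns_n^{*}$ into the latter. One must therefore transplant a faithful enough copy of the $\omega$--excision data into the $\hat f_n$--region so that the identity $s_n^{*}as_n\approx\varphi(a)e_n$ survives verbatim, and simultaneously balance the decay of the excision error against the norm growth of the Cuntz--comparison witnesses. Strict comparison is exactly what makes this possible --- every piece to be transplanted has vanishing dimension function while the $\hat f_n$--region keeps a fixed amount of trace, so there is enough room --- but carrying out the transplantation compatibly with the excision is the heart of the proof. (It is cleanest to settle first the rank--one case $N=1$ and then bootstrap.)
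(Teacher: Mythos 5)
Your overall strategy (AAP excision of a pure state plus strict comparison with norm-controlled implementing elements, after normalizing $(e_n)$ to have small dimension functions and cutting $(f_n)$ down to $(\hat f_n)$) is the right family of ideas, and the parts you do spell out are essentially workable: the reduction to $\max_\tau d_\tau(e_n)\to 0$, the spectral cut-down of $f_n$, and the production of elements $w_j=h_{k(n)}z_j^{(n)}$ of controlled norm with $w_i^*w_j\approx e_n^{1/2}c_i^*c_je_n^{1/2}$ can all be carried out from $b_n\precsim h_{k(n)}^2$ together with R\o rdam's lemma. The genuine gap is your third requirement on the $z_j^{(n)}$, namely $\hat f_n\,(d_jh_{k(n)}z_j^{(n)})\approx d_jh_{k(n)}z_j^{(n)}$. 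This is a condition on the \emph{left} support of $d_jh_{k(n)}z_j^{(n)}$, which is contained in the left support of $d_jh_{k(n)}$ no matter how $z_j^{(n)}$ is chosen; right multiplication cannot move it, and the subequivalence $b_n\precsim\hat f_n$ gives no handle on it. So the asserted simultaneous construction of the $z_j^{(n)}$ does not follow from the two comparisons you invoke, and your own closing paragraph concedes that this ``transplantation'' of the excision data into the $\hat f_n$-region is the heart of the matter and is left unresolved. As written, the proof therefore breaks exactly at the step the proposition is about.

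The paper resolves this point differently, and it is worth seeing how: the $f_n$-support is not imposed on the left of $s_n$ at all. One first replaces $(f_n)$ by $(\tilde f_n)$ with $(\tilde f_nf_n)_n=(\tilde f_n)_n$ (Lemma \ref{Lemfn}), and then defines $s_n=\sum_i d_i\,a\,\tilde f_n^{1/2}r_nc_i$, where $a$ is the AAP excisor for $\omega$ on the elements $d_i^*xd_j$ and $r_n$ is a \emph{bounded} sequence with $r_n^*\tilde f_n^{1/2}a^2\tilde f_n^{1/2}r_n\to e_n$. Then $\lVert(1-f_n)d_ia\tilde f_n^{1/2}\rVert=\lVert d_ia(1-f_n)\tilde f_n^{1/2}\rVert+o(1)\to 0$ because $(f_n)$ is \emph{central}, so $f_ns_n\approx s_n$ holds with no left-support condition; and the excision estimate survives because $\lVert r_n\rVert$ is bounded by $1$, so $\lVert r_n^*\tilde f_n^{1/2}(ad_i^*xd_ja-\omega(d_i^*xd_j)a^2)\tilde f_n^{1/2}r_n\rVert$ is controlled by the AAP error. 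The quantitative input that makes the bounded $r_n$ exist is precisely the ``overlap'' statement your obstacle paragraph asks for: by simplicity one has $\sum_k v_k^*a_1v_k=1$ for a suitable $a_1\leq a_0^m$ (Lemma \ref{LemAlpha}), whence $\liminf_n\min_\tau\tau\bigl((\tilde f_n^{1/2}a_0\tilde f_n^{1/2})^m\bigr)\geq\alpha c>0$ uniformly in $m$; cutting $b_n=\tilde f_n^{1/2}a_0\tilde f_n^{1/2}$ near the top of its spectrum then keeps its dimension function bounded below, strict comparison gives $q_n$ with $q_n^*g_{\varepsilon_n}(b_n)q_n\to e_n$, and $r_n=g_{\varepsilon_n}^{1/2}(b_n)q_n$ is automatically bounded since $r_n^*r_n\to e_n$ (Lemma \ref{LemBddrn}). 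If you want to salvage your own construction, the fix is the same in spirit: put the $\tilde f_n^{1/2}$ factor \emph{next to} the excisor (i.e.\ compare $e_n$ with $\tilde f_n^{1/2}a^2\tilde f_n^{1/2}$ rather than trying to support $d_jh z_j$ under $\hat f_n$), prove the uniform trace lower bound for that product, and let centrality of $(f_n)$ convert the interior $\tilde f_n^{1/2}$ into the condition $f_ns_n\approx s_n$.
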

  
In order to prove this proposition, 
we need a couple of lemmas.
\begin{lemma}\label{Lemfn}
Let $A$ be a separable  C$^*$-algebra with $T(A)\neq \emptyset$. 
For any central sequence $(f_n)_n$ of positive contractions in $A$, 
there exists a central sequence 
$(\tilde f_n)_n$ of positive contractions in $A$ such that 
$(\tilde f_nf_n)_n= (\tilde f_n)_n$ and 
\[ \lim_{m\rightarrow\infty} \liminf_{n\rightarrow\infty} \min_{\tau\in T(A)} \tau({\tilde f_n}^m) = \lim_{m\rightarrow\infty} \liminf_{n\rightarrow\infty} \min_{\tau\in T(A)} \tau(f_n^m).\]
\end{lemma}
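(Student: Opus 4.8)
The plan is to obtain $\tilde f_n$ from $f_n$ by continuous functional calculus. For $0<\epsilon<\tfrac12$ let $g_\epsilon\colon[0,1]\to[0,1]$ be the continuous function equal to $0$ on $[0,1-2\epsilon]$, equal to $1$ on $[1-\epsilon,1]$, and linear on $[1-2\epsilon,1-\epsilon]$, and set $\tilde f_n=g_{\epsilon(n)}(f_n)$ for a sequence $\epsilon(n)\to0$ still to be chosen. Two features of $g_\epsilon$ will be used repeatedly. First, $g_\epsilon$ is supported in $[1-2\epsilon,1]$, so $\lVert\tilde f_nf_n-\tilde f_n\rVert=\lVert(g_{\epsilon(n)}\cdot\mathrm{id}-g_{\epsilon(n)})(f_n)\rVert\le 2\epsilon(n)$; hence $(\tilde f_nf_n)_n=(\tilde f_n)_n$ as soon as $\epsilon(n)\to0$, which is the first asserted property. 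Second, $g_\epsilon$ equals $0$ near $0$ and $1$ near $1$, so $g_\epsilon^k$ retains both features (only its interpolation on $[1-2\epsilon,1-\epsilon]$ steepens) for every $k\ge1$; this lets one pass freely between $\tilde f_n$ and its powers below. Centrality of $(\tilde f_n)_n$ will be arranged by a diagonal argument, using that $(g_\epsilon(f_n))_n$ is a central sequence for each fixed $\epsilon$ (approximate $g_\epsilon$ uniformly by polynomials).

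The arithmetic heart is the elementary two-sided inequality
\[
(1-2\epsilon)^m\,g_\epsilon(t)^k\ \le\ t^m\ \le\ g_\epsilon(t)^k+(1-\epsilon)^m\qquad\bigl(t\in[0,1],\ m,k\ge1,\ 0<\epsilon<\tfrac12\bigr),
\]
checked by splitting into the cases $t\ge1-\epsilon$, $1-2\epsilon\le t<1-\epsilon$, and $t<1-2\epsilon$. Applying continuous functional calculus at $f_n$, evaluating on a tracial state, and taking the minimum over $T(A)$ yields, for all $m,k\ge1$,
\[
(1-2\epsilon(n))^m\min_{\tau\in T(A)}\tau(\tilde f_n^{\,k})\ \le\ \min_{\tau\in T(A)}\tau(f_n^m),\qquad\min_{\tau\in T(A)}\tau(\tilde f_n^{\,k})\ \ge\ \min_{\tau\in T(A)}\tau(f_n^m)-(1-\epsilon(n))^m.
\]
Write $R=\lim_m\liminf_n\min_\tau\tau(f_n^m)$. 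From the first inequality, since $(1-2\epsilon(n))^m\to1$ as $n\to\infty$ for each fixed $m$, we get $\liminf_n\min_\tau\tau(\tilde f_n^{\,k})\le\liminf_n\min_\tau\tau(f_n^m)$ for every $m$, hence $\le R$, for every $k$; this is ``$\le$'' in the lemma, and it uses nothing of $\epsilon(n)$ beyond $\epsilon(n)\to0$. The reverse inequality is the delicate part: in the second estimate one must replace $m$ by a sequence $m(n)\to\infty$ with $(1-\epsilon(n))^{m(n)}\to0$, i.e.\ $\epsilon(n)m(n)\to\infty$, while keeping $\min_\tau\tau(f_n^{m(n)})\to R$.

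To arrange this, note that $b_m:=\liminf_n\min_\tau\tau(f_n^m)$ decreases to $R$, so a routine diagonal argument provides an increasing sequence $(N_m)$ with $\min_\tau\tau(f_n^m)\ge R-\tfrac1m$ whenever $n\ge N_m$; put $M(n)=\max\{m:N_m\le n\}$. Separately, fix a decreasing $h$ with $h(m)\in(0,\tfrac12)$, $h(m)\to0$, $m\,h(m)\to\infty$ (e.g.\ $h(m)=m^{-1/2}$ for large $m$) and a dense sequence $(a_\ell)$ in the unit ball of $A$; since each $(g_{h(m)}(f_n))_n$ is central, there is an increasing $(N'_m)$ with $\lVert[a_\ell,g_{h(m)}(f_n)]\rVert<\tfrac1m$ for all $\ell\le m$ and $n\ge N'_m$, and we put $M'(n)=\max\{m:N'_m\le n\}$. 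Now set $m(n)=\min\{M(n),M'(n)\}$ and $\epsilon(n)=h(m(n))$: then $m(n)\to\infty$, $\epsilon(n)\to0$, $\epsilon(n)m(n)=m(n)\,h(m(n))\to\infty$; from $m(n)\le M(n)$ we get $n\ge N_{m(n)}$, whence $\min_\tau\tau(f_n^{m(n)})\ge R-\tfrac1{m(n)}$, and from $m(n)\le M'(n)$ we get $n\ge N'_{m(n)}$, so $\lVert[a_\ell,\tilde f_n]\rVert<\tfrac1{m(n)}$ once $m(n)\ge\ell$ and thus $(\tilde f_n)_n$ is central by density. Substituting $m=m(n)$ in the second estimate gives $\liminf_n\min_\tau\tau(\tilde f_n^{\,k})\ge\liminf_n\bigl(R-\tfrac1{m(n)}-(1-\epsilon(n))^{m(n)}\bigr)=R$ for every $k$, which together with ``$\le$'' proves the lemma. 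I expect the genuine obstacle to lie in this coordination — $m(n)$ must grow fast enough to kill the error $(1-\epsilon(n))^{m(n)}$, slowly enough to retain the value $R$, and compatibly with the centrality bookkeeping; conceptually, everything else is packaged in the single displayed inequality.
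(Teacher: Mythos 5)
Your proof is correct, and all the key steps check out: the two-sided pointwise inequality $(1-2\epsilon)^m g_\epsilon(t)^k\le t^m\le g_\epsilon(t)^k+(1-\epsilon)^m$ is valid, it passes through functional calculus and traces as you claim, and the diagonalization coordinating $\epsilon(n)$, $m(n)$, and centrality is sound. The underlying strategy is the same as the paper's --- replace $f_n$ by a function of $f_n$ concentrated near the top of the spectrum, with the rate of concentration slowed down by a diagonal argument so as to preserve centrality and the trace data --- but the implementation differs. The paper simply takes $\tilde f_n=f_n^{\tilde m_n}$, where $m_n$ is chosen (as in your $N_m$/$M(n)$ step) so that $\liminf_n\min_\tau\tau(f_n^{m_n})\ge R$ and then $\tilde m_n\to\infty$ slowly with $\tilde m_n\le m_n^{1/2}$ and $(f_n^{\tilde m_n})_n$ still central. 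This choice makes both trace estimates come for free from monotonicity of powers of a single positive contraction: $\tau(\tilde f_n^m)=\tau(f_n^{m\tilde m_n})\le\tau(f_n^m)$ gives ``$\le$'', and $m\tilde m_n\le m_n$ for large $n$ gives $\tau(f_n^{m\tilde m_n})\ge\tau(f_n^{m_n})$, hence ``$\ge$'', with no analogue of your explicit inequality or of the error term $(1-\epsilon(n))^{m(n)}$ needed; also $\lVert f_n\tilde f_n-\tilde f_n\rVert\le 1/(\tilde m_n+1)\to0$ replaces your $2\epsilon(n)$ bound. Your trapezoidal cutoff costs a little more bookkeeping (the factor $(1-2\epsilon)^m$ is needed precisely because $g_\epsilon(f_n)\not\le f_n$), but it is more quantitative and the same device reappears later in the paper (the functions $g_\delta$ in Lemma 2.5), so nothing is lost; both routes are equally rigorous.
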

\begin{proof}
We can find a natural number $N_m\in\N$ such that 
\[
\liminf_{n\to\infty}\min_{\tau\in T(A)}\tau(f_n^m)
<\min_{\tau\in T(A)}\tau(f_l^m)+\frac{1}{m}
\]
holds for every $l>N_m$. 
We may assume $N_m<N_{m+1}$. 
Define a sequence $(m_n)_n$ of natural numbers 
so that $m_n=m$ when $N_m<n\leq N_{m+1}$. 
Note that $(m_n)_n$ is an increasing sequence such that $m_n\to\infty$ and 
\[
\liminf_{n\to\infty}\min_{\tau\in T(A)}\tau(f_n^{m_n})
\geq\lim_{m\to\infty}\liminf_{n\to\infty}\min_{\tau\in T(A)}\tau(f_n^m). 
\]
Let $(\tilde m_n)_n$ be a sequence of natural numbers such that 
$\tilde m_n\to\infty$, $\tilde m_n\leq m_n^{1/2}$ and 
$(f^{\tilde m_n}_n)_n$ is a central sequence. 
Let $\tilde f_n=f_n^{\tilde m_n}$. 
It is easy to see $(f_n\tilde f_n)_n=(\tilde f_n)_n$.
Also, we have 
\begin{align*}
\lim_{m\to\infty}\liminf_{n\to\infty}\min_{\tau\in T(A)}\tau(f_n^m)
&\geq\lim_{m\to\infty}\liminf_{n\to\infty}
\min_{\tau\in T(A)}\tau(\tilde f_n^m)\\
&=\lim_{m\to\infty}\liminf_{n\to\infty}
\min_{\tau\in T(A)}\tau(f_n^{m\tilde m_n})\\
&\geq\liminf_{n\to\infty}\min_{\tau\in T(A)}\tau(f_n^{m_n})
\geq\lim_{m\to\infty}\liminf_{n\to\infty}\min_{\tau\in T(A)}\tau(f_n^m). 
\end{align*}
\end{proof}

\begin{lemma}\label{LemAlpha}
Let $A$ be a unital separable simple C$^*$-algebra with $T(A)\neq \emptyset$ and let $a\in A$ be a non-zero positive element. Then there exists $\alpha >0$ such that 
\[\alpha \liminf_{n\rightarrow\infty} \min_{\tau\in T(A)} \tau(f_n) \leq \liminf_{n\rightarrow\infty} \min_{\tau\in T(A)} \tau(f_n^{1/2} af_n^{1/2}),\]
for any central sequence $(f_n)_n$ of positive contractions in $A$.
\end{lemma}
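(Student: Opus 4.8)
The plan is to use simplicity of $A$ to dominate $a$ from below (in the Cuntz sense, or more concretely in an averaged sense) by a scalar multiple of the unit, and then track traces through this comparison. Since $a$ is a non-zero positive element of a unital simple C$^*$-algebra, there exist $x_1,\dots,x_k\in A$ with $\sum_{i=1}^k x_i^* a x_i = 1$. First I would fix such elements and set $M=\max_i\lVert x_i\rVert^2$, so that $M\geq 1/k$. The idea is then that for any positive contraction $f$,
\[
\tau(f) = \tau\Bigl(f^{1/2}\Bigl(\sum_i x_i^* a x_i\Bigr)f^{1/2}\Bigr)
= \sum_i \tau\bigl((x_i f^{1/2})^*a(x_i f^{1/2})\bigr),
\]
and I want to replace each $x_i f^{1/2}$ by (roughly) $f^{1/2}x_i$ up to a commutator error that vanishes along the central sequence. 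Concretely, $\tau\bigl((x_if^{1/2})^*a(x_if^{1/2})\bigr)$ and $\tau\bigl(f^{1/2}x_i^*ax_if^{1/2}\bigr)$ differ by a term controlled by $\lVert[x_i,f_n^{1/2}]\rVert$ (times a constant depending on $\lVert a\rVert$, $\lVert x_i\rVert$), which tends to $0$ as $n\to\infty$ because $(f_n)_n$ — hence $(f_n^{1/2})_n$ — is central. Moreover $\tau\bigl(f^{1/2}x_i^*ax_if^{1/2}\bigr)\leq \lVert x_i^*ax_i\rVert^{1/2}\cdot$ — no, more simply, since $x_i^*ax_i\leq \lVert x_i\rVert^2 a$ we get $\tau\bigl(f^{1/2}x_i^*ax_if^{1/2}\bigr)\leq M\,\tau(f^{1/2}af^{1/2})$.

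Carrying this out: for a central sequence $(f_n)_n$ of positive contractions and any $\tau\in T(A)$,
\[
\tau(f_n) \leq kM\,\tau(f_n^{1/2}af_n^{1/2}) + \varepsilon_n,
\]
where $\varepsilon_n=\sum_i\bigl\lvert\tau\bigl((x_if_n^{1/2})^*a(x_if_n^{1/2})\bigr)-\tau\bigl(f_n^{1/2}x_i^*ax_if_n^{1/2}\bigr)\bigr\rvert$ is bounded by $C\max_i\lVert[x_i,f_n^{1/2}]\rVert$ for a constant $C$ independent of $\tau$, and hence $\sup_{\tau\in T(A)}\varepsilon_n\to 0$. Taking $\min_{\tau\in T(A)}$ on the left and $\sup_{\tau\in T(A)}$-bounding the error, then $\liminf_{n\to\infty}$, yields
\[
\liminf_{n\to\infty}\min_{\tau\in T(A)}\tau(f_n)
\leq kM\,\liminf_{n\to\infty}\min_{\tau\in T(A)}\tau(f_n^{1/2}af_n^{1/2}),
\]
so $\alpha=(kM)^{-1}$ works.

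The one point requiring care — and the main obstacle — is the uniformity of the commutator estimate over $T(A)$: I must be sure the bound on $\varepsilon_n$ does not depend on $\tau$. This is fine because $\lvert\tau(y)\rvert\leq\lVert y\rVert$ for every tracial state, so each difference $\tau\bigl((x_if_n^{1/2})^*a(x_if_n^{1/2})\bigr)-\tau\bigl(f_n^{1/2}x_i^*ax_if_n^{1/2}\bigr)$ is bounded in absolute value by the norm of the operator difference, which by a routine telescoping estimate is at most (a constant times $\lVert a\rVert\lVert x_i\rVert$) $\cdot\,\lVert[x_i,f_n^{1/2}]\rVert$. A secondary subtlety is that I have used $(f_n^{1/2})_n$ is central; this follows from $(f_n)_n$ being central by functional calculus (uniform approximation of $t\mapsto t^{1/2}$ on $[0,1]$ by polynomials, or by a standard continuity argument), and it is worth recording explicitly. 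With these two observations in hand the argument is otherwise just the computation above.
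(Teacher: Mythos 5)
The central step of your argument is wrong: the operator inequality $x_i^*ax_i\leq\lVert x_i\rVert^2 a$ does not hold in general, since conjugation by $x_i$ can move the ``support'' of $a$ completely (take $a$ a projection and $x_i$ a unitary with $x_i^*ax_i$ orthogonal to $a$). What is true is only $a^{1/2}x_ix_i^*a^{1/2}\leq\lVert x_i\rVert^2a$, i.e.\ the scalar bound is available when $x_ix_i^*$ sits \emph{between} the two copies of $a^{1/2}$, not when $a$ is conjugated by $x_i$. Note also that your commutator step carries no content: $(x_if_n^{1/2})^*a(x_if_n^{1/2})=f_n^{1/2}x_i^*ax_if_n^{1/2}$ holds exactly, so your error terms $\varepsilon_n$ vanish identically and centrality of $(f_n)_n$ is never genuinely used. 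That is a decisive red flag, because the lemma is false without centrality: in a UHF algebra take $a=p$ and $f_n\equiv q$ with $p,q$ nonzero orthogonal Murray--von Neumann equivalent projections; then $\tau(q)>0$ while $\tau(q^{1/2}pq^{1/2})=0$, and the same example (with $x$ a unitary carrying $p$ to $q$) shows that even the trace inequality $\tau(f^{1/2}x^*axf^{1/2})\leq\lVert x\rVert^2\tau(f^{1/2}af^{1/2})$ fails for non-central $f$.

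The repair is exactly the cyclic-permutation trick used in the paper, which your write-up skips. Starting from $\tau(f_n)=\sum_i\tau(x_i^*ax_if_n)$, use centrality of $(f_n)_n$ to replace this, up to errors bounded in norm by constants times $\lVert[f_n,x_i]\rVert$ and $\lVert[f_n,a^{1/2}]\rVert$ (hence uniformly small over $T(A)$, since $\lvert\tau(y)\rvert\leq\lVert y\rVert$), by $\sum_i\tau(x_i^*a^{1/2}f_na^{1/2}x_i)$; by the trace property this equals $\sum_i\tau\bigl(f_n^{1/2}a^{1/2}x_ix_i^*a^{1/2}f_n^{1/2}\bigr)$, and now $x_ix_i^*\leq\lVert x_i\rVert^2\cdot 1$ yields the bound $\bigl(\sum_i\lVert x_i\rVert^2\bigr)\tau\bigl(f_n^{1/2}af_n^{1/2}\bigr)$. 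With this substitution your framework (simplicity giving $\sum_i x_i^*ax_i=1$, uniformity over $T(A)$ via norm estimates, and $\alpha=\bigl(\sum_i\lVert x_i\rVert^2\bigr)^{-1}$ in place of $(kM)^{-1}$) goes through, and is then essentially identical to the paper's proof.
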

\begin{proof}
Since $A$ is unital and simple, there exist $v_1, v_2,\dots, v_m \in A$ such that
$\displaystyle \sum_{i=1}^m v_i^*av_i =1$. Set $\alpha= (\sum_i \|v_i\|^2)^{-1} > 0$. Then we have
\begin{align*}
\liminf_{n\rightarrow\infty} \min_{\tau\in T(A)} \tau(f_n)
&=\liminf_n\min_{\tau}\sum_{i=1}^m\tau(v_i^*av_i f_n) \\
&= \liminf_n\min_{\tau} \sum_{i=1}^m \tau(v_i^*{a^{1/2}} f_n {a^{1/2}} v_i)\\
&= \liminf_n \min_{\tau} \sum_{i=1}^m \tau (f_n^{1/2} {a^{1/2}} v_i{v_i}^* {a^{1/2}} f_n^{1/2} )\nonumber \\
&\leq \alpha^{-1} \liminf_n \min_{\tau} \tau (f_n^{1/2} af_n^{1/2}).\nonumber
\end{align*}
\end{proof}

\begin{lemma}\label{LemBddrn}
Let $A$ be a unital separable simple C$^*$-algebra with $T(A)\neq \emptyset$. 
Suppose that $A$ has strict comparison. 
Let $(e_n)_n$ and $(f_n)_n$ be as in Definition \ref{DefExcision}. 
Then for any norm one positive element $a\in A$, 
there exists a sequence $(r_n)_n$ in $A$ such that 
\[
\lim_{n\to\infty}\lVert r_n^*f_n^{1/2}a f_n^{1/2}r_n-e_n\rVert=0,\quad 
\limsup_{n\to\infty}\lVert r_n\rVert= \limsup_{n\rightarrow\infty}\lVert e_n\rVert^{1/2}. 
\]
\end{lemma}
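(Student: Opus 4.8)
Write $b_n:=f_n^{1/2}af_n^{1/2}$, so $\|b_n\|\le 1$. The plan is, after cutting the target down slightly, to realize $e_n$ inside the hereditary subalgebra generated by $b_n$ by means of a small element $r_n$, using strict comparison together with the fact that $b_n$ is ``large'' both in norm and in trace.

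I would first record two preliminary observations. Since $\|f_n^m\|=\|f_n\|^m$ and $\tau(f_n^m)\le\|f_n\|^m$ for every $\tau$, the hypothesis $\lim_m\liminf_n\min_\tau\tau(f_n^m)>0$ forces $\|f_n\|\to 1$. Applying Lemma \ref{LemAlpha} to the nonzero positive element $a$ and using $\liminf_n\min_\tau\tau(f_n)\ge\lim_m\liminf_n\min_\tau\tau(f_n^m)>0$ gives $\liminf_n\min_\tau\tau(b_n)>0$, hence $\liminf_n\min_\tau d_\tau(b_n)>0$; and the same reasoning applied to the top spectral cut-downs of $f_n$ (which still carry trace bounded below, by the persistent-trace hypothesis) — together with simplicity of $A$, so that such a central cut-down cannot be asymptotically orthogonal to the fixed element $a$ without contradicting Lemma \ref{LemAlpha} — shows $\|b_n\|\to 1$. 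Secondly, for any $(r_n)_n$ with $r_n^*b_nr_n\to e_n$ one has $\|r_n\|^2\ge\|r_n^*b_nr_n\|/\|b_n\|\ge\|r_n^*b_nr_n\|$, so $\limsup_n\|r_n\|\ge\limsup_n\|e_n\|^{1/2}$ automatically; thus only the reverse inequality, i.e.\ an upper bound on $\|r_n\|$, requires work.

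Now fix $\varepsilon\in(0,1)$ and set $\bar e_n:=(e_n-\varepsilon)_+$; then $\|\bar e_n-e_n\|\le\varepsilon$, $\|\bar e_n\|\le\|e_n\|$, and $d_\tau(\bar e_n)=\tau(\chi_{(\varepsilon,\infty)}(e_n))\le\varepsilon^{-1}\tau(e_n)$, so $\max_\tau d_\tau(\bar e_n)\to 0$. Hence for large $n$ we have $d_\tau(\bar e_n)<d_\tau(b_n)$ for every $\tau$, and strict comparison yields $\bar e_n\precsim b_n$. The technical core is to upgrade this to a \emph{norm-controlled} realization. Using $\|b_n\|\to1$ and $\liminf_n\min_\tau d_\tau(b_n)>0$, I would pass to a functional-calculus normalization $c_n=h_n(b_n)$ with $\|c_n\|\to1$, the same support (hence the same Cuntz class) as $b_n$, and with almost all of its spectral mass pushed above $1-\eta$ for a prescribed $\eta>0$, so that $\liminf_n\min_\tau\tau\big(\chi_{(1-\eta,\infty)}(c_n)\big)>0$. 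Since $\max_\tau d_\tau(\bar e_n)\to0$, strict comparison gives $\bar e_n\precsim\chi_{(1-\eta,\infty)}(c_n)$; Rørdam's lemma together with a polar decomposition then lets one route a cut-down $(\bar e_n-\delta)_+$ into the part of the spectrum of $c_n$ lying above $1-\eta$, producing $v_n$ with $\|v_n\|\le\|(\bar e_n-\delta)_+\|^{1/2}\le\|e_n\|^{1/2}$ and $\|v_n^*c_nv_n-(\bar e_n-\delta)_+\|\le 2\eta\|e_n\|$. Transporting back from $c_n$ to $b_n$ (which costs a factor that tends to $1$ as the normalization is chosen finer, because $\|b_n\|\to1$) yields $r_n$ with $r_n^*b_nr_n$ within $\varepsilon+\delta+O(\eta)$ of $e_n$ and $\limsup_n\|r_n\|\le(1+o(1))\limsup_n\|e_n\|^{1/2}$.

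Finally, letting $\varepsilon,\delta,\eta\to0$ and diagonalising — choosing a sequence of indices along which all the error terms above are dominated — produces $(r_n)_n$ in $A$ with $\|r_n^*b_nr_n-e_n\|\to0$ and $\limsup_n\|r_n\|\le\limsup_n\|e_n\|^{1/2}$; combined with the reverse inequality noted above this gives the asserted equality. The main obstacle is the third paragraph: making the multiplicative constant in the norm estimate equal to $1+o(1)$ rather than a constant bounded away from $1$. This is exactly what forces the two normalizations (namely $\|b_n\|\to1$, and the passage to $c_n$ of norm close to $1$ with spectral mass concentrated near the top), and it is the only place where strict comparison together with the size of $b_n$ is used in an essential way; within it, arranging the mass estimates for $c_n$ uniformly over $\tau\in T(A)$ is the delicate technical point.
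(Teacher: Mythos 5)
Your overall skeleton (cut $e_n$ down so that its dimension functions tend to $0$ uniformly, use strict comparison to push it under the top spectral part of $b_n=f_n^{1/2}af_n^{1/2}$, and recover the norm bound by compressing with functional calculus of $b_n$) is the same as the paper's, but there is a genuine gap at the one point where the whole difficulty of the lemma sits. Your estimate $\limsup_n\lVert r_n\rVert\le(1+o(1))\limsup_n\lVert e_n\rVert^{1/2}$ requires that the spectral mass of $b_n$ \emph{near its top} carry trace bounded below uniformly over $T(A)$: you must be able to take the level $t$ from which the comparison draws its mass arbitrarily close to $1$ while keeping $\liminf_n\min_\tau\tau\bigl(\chi_{(t,1]}(b_n)\bigr)>0$; otherwise ``transporting back from $c_n$ to $b_n$'' costs a fixed factor of order $t^{-1/2}$ (or a fixed multiplicative loss in $r_n^*b_nr_n\approx r_n^*c_nr_n$), not $1+o(1)$. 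The two facts you establish in your second paragraph, $\lVert b_n\rVert\to1$ and $\liminf_n\min_\tau d_\tau(b_n)>0$, do not imply this: a sequence of positive contractions can have norm tending to $1$ and dimension functions bounded below while the trace of its spectral part above any fixed level $<1$ tends to $0$, so ``because $\lVert b_n\rVert\to1$'' is not a justification, and this is exactly where extra structure must be used. The paper handles it by replacing $a$ with a nearby $a_0$ having a flat top (choosing $a_1\neq0$ with $a_1\le a_0^m$ for all $m$), so that Lemma \ref{LemAlpha} applied to $a_1$ gives $\tau(b_n^m)\ge\alpha c$ uniformly in $m$, and then deducing $\liminf_n\min_\tau d_\tau(g_{\varepsilon_n}(b_n))\ge\alpha c$ from $\tau(g_{\varepsilon_n}(b_n))\ge\tau(b_n^{m_n})-(1-\varepsilon_n)^{m_n}$ with $m_n\to\infty$ and $(1-\varepsilon_n)^{m_n}\to0$. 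A variant fitting your set-up is to apply Lemma \ref{LemAlpha} to the spectral cut-down $g_\delta(a)$ rather than to $a$, getting $\tau(b_n^m)\ge(1-\delta)^m\alpha_\delta c$ for every $m$, and then to choose $m$ with $(1-\delta)^m\alpha_\delta c$ much larger than $(1-2\delta)^m$. Without an argument of this kind your third paragraph does not go through.

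A smaller but real flaw: your argument for $\lVert b_n\rVert\to1$, namely that the top cut-downs of $f_n$ cannot be asymptotically orthogonal to the fixed element $a$, only yields $\lVert b_n\rVert$ bounded below by a small constant of order $\alpha c$; to get $\lVert b_n\rVert\to1$ you need non-orthogonality of the top cut of $f_n$ with the top cut $g_\delta(a)$ of $a$ (which Lemma \ref{LemAlpha} applied to $g_\delta(a)$ does provide), followed by a state argument. This is repairable, but note that the paper never needs $\lVert b_n\rVert\to1$ as a separate statement: the trace estimate on the top part of $b_n$ already contains it. Note also that the paper obtains the norm control for free by setting $r_n=g_{\varepsilon_n}^{1/2}(b_n)q_n$, so that $r_n^*r_n=q_n^*g_{\varepsilon_n}(b_n)q_n\to e_n$, rather than extracting a norm bound from R\o rdam's lemma; your route there can be made to work, but it is not where the problem lies.
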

\begin{proof}
By Lemma 3.2 (i) in \cite{Sat}, 
we may assume $\displaystyle\lim_n\max_\tau d_\tau(e_n) =0$. 
Set 
\[
c=\lim_{m\to\infty}\liminf_{n\to\infty}\min_{\tau\in T(A)}\tau(f_n^m)>0. 
\]
Take $\varepsilon>0$. 
It suffices to show that there exist $r_n\in A$, $n\in\N$ such that 
\[
\limsup_{n\to\infty}\lVert r_n^*f_n^{1/2}af_n^{1/2}r_n-e_n\rVert
\leq\varepsilon,\quad 
\lim_{n\to\infty}\lVert r_n^*r_n - e_n \rVert =0. 
\]

As $\lVert a\rVert=1$, using continuous functional calculus, 
we get non-zero positive contractions $a_0,a_1\in A$ such that 
$\lVert a_0-a\rVert\leq\varepsilon$ and 
$a_1\leq a_0^m$ for all $m\in\N$. 
Applying Lemma \ref{LemAlpha} to $a_1\in A_+\setminus\{0\}$, we obtain $\alpha>0$. 
Then for any $m\in\N$ it follows that 
\begin{align*}
\alpha c&\leq\alpha\liminf_{n\to\infty}\min_{\tau\in T(A)}\tau(f_n^m)\\
&\leq\liminf_{n\to\infty}\min_{\tau\in T(A)}\tau(f_n^{m/2}a_1f_n^{m/2})\\
&\leq\liminf_{n\to\infty}\min_{\tau\in T(A)}\tau(f_n^{m/2}a_0^mf_n^{m/2}). 
\end{align*}
Put $b_n=f_n^{1/2}a_0f_n^{1/2}$. 
Since $(f_n)_n$ is central, one has 
\[
\liminf_{n\to\infty}\min_{\tau\in T(A)}\tau(b_n^m)
=\liminf_{n\to\infty}\min_{\tau\in T(A)}\tau(f_n^{m/2}a_0^mf_n^{m/2})
\geq\alpha c
\]
for any $m\in\N$. Then we have an increasing sequence $m_n \in \N$ of natural numbers such that $m_n \rightarrow \infty$ and $\displaystyle \liminf_n \min_{\tau} \tau (b_n^{m_n}) \geq \alpha c$.

For $\delta >0$, define a continuous function $g_{\delta}\in C([0,1])$ by $g_{\delta}(t) = \max \{ 0, \delta^{-1}(t-1+\delta)\}$. Let $\varepsilon_n>0$, $n\in\N$ be a decreasing sequence such that $\varepsilon_n \rightarrow 0$ and $(1-\varepsilon_n)^{m_n}\to 0$. Then we have 
\begin{align*}
\liminf_{n\rightarrow\infty} \min_{\tau\in T(A)} d_{\tau}(g_{\varepsilon_n} (b_n))
&\geq \liminf_n\min_{\tau} \tau(g_{\varepsilon_n}(b_n))  \\
&\geq \liminf_n\min_{\tau} \tau(b_n^{m_n}) - (1-\varepsilon_n)^{m_n} \\
&= \liminf_n\min_{\tau} \tau (b_n^{m_n}) \geq \alpha c >0.
\end{align*}

Because $A$ has strict comparison, 
we can find a sequence $(q_n)_n$ in $A$ such that 
\[
\lim_{n\to\infty}\lVert q_n^*g_{\varepsilon_n}(b_n)q_n-e_n\rVert=0. 
\]
Note that $(q_n)_n$ is not necessarily bounded. 
We define $r_n = g_{\varepsilon_n}^{1/2}(b_n)q_n$ for $n\in\N$.
Then it follows that
\begin{align*}
\lVert(1-b_n)r_n\rVert
&\leq \varepsilon_n\lVert r_n\rVert\to 0, \\
\| r_n^* b_n r_n -e_n \| 
&\leq \lVert r_n^* (b_n-1) r_n\rVert + \lVert r_n^*r_n -e_n\rVert\to \ 0,
\end{align*}
 as $n\rightarrow\infty$.
Consequently we have 
\[
\limsup_{n\to\infty}\lVert r_n^*f_n^{1/2}af_n^{1/2}r_n-e_n\rVert
\leq\limsup_{n\to\infty}
\lVert r_n^*f_n^{1/2}a_0f_n^{1/2}r_n-e_n\rVert+\varepsilon
=\varepsilon. 
\]
\end{proof}

Now we are ready to prove Proposition \ref{PropMain}. 
\begin{proof}[Proof of Proposition \ref{PropMain}]
Let $\varphi:A\to A$ be as in the statement. 
Replacing $c_i$ and $d_i$ with $c_i/\|c_i\|$ and $\|c_i\|d_i$ we may assume 
$\|c_i\|\leq 1$. 
Let $F$ be a finite subset of the unit ball of $A$ and let $\varepsilon>0$. 
It suffices to show that there exist $s_n\in A$, $n\in\N$ such that 
\[
\limsup_{n\to\infty}\lVert s_n^*xs_n-\varphi(x)e_n\rVert<\varepsilon
\quad\text{and}\quad 
\lim_{n\to\infty}\lVert f_ns_n-s_n\rVert=0
\]
for any $x\in F$. 
Set $G=\{d_i^*xd_j\in A\mid x\in F,\ i=1,2,\dots,N\}$ and 
$\delta=\varepsilon/N^2$. 

Since $A$ is unital simple infinite-dimensional, by Glimm's lemma, 
any state on $A$ can be approximated by pure states in the weak$*$-topology. 
Hence we may assume that $\omega$ is a pure state on $A$. 
By \cite[Proposition 2.2]{AAP}, there exists $a\in A_+$ such that 
$\lVert a\rVert=1$ and 
$\lVert a(\omega(x) -x)a\rVert<\delta$ for every $x\in G$. 
Let $(e_n)_n$ and $(f_n)_n$ be as in Definition \ref{DefExcision}. 
By Lemma \ref{Lemfn}, 
we obtain a central sequence $(\tilde f_n)_n$ of positive contractions in $A$ 
satisfying $(\tilde f_nf_n)_n=(\tilde f_n)_n$ and 
\[
\lim_{m\to\infty}\liminf_{n\to\infty}\min_{\tau\in T(A)}\tau(\tilde f_n^m)
=\lim_{m\to\infty}\liminf_{n\to\infty}\min_{\tau\in T(A)}\tau(f_n^m)>0. 
\]
Applying Lemma \ref{LemBddrn} to $(e_n)_n$, $(\tilde f_n)_n$ and $a^2$, 
we obtain $r_n\in A$, $n\in\N$ satisfying 
\[
\lim_{n\to\infty}
\lVert r_n^*\tilde f_n^{1/2}a^2\tilde f_n^{1/2}r_n-e_n\rVert=0,\quad 
\limsup_{n\to\infty}\lVert r_n\rVert\leq 1. 
\]

We define  
\[ s_n = \sum_{i=1}^N d_ia {\tilde f_n}^{1/2} r_n c_i,\quad n\in\N.\]
Since $(f_n)_n$ is central and $(r_n)_n$ is bounded it follows that 
\begin{align*}
\limsup_n\| f_ns_n- s_n\| 
&\leq \limsup_n \sum_{i=1}^N \| (1-f_n)d_ia{\tilde f_n}^{1/2}\|\cdot\|r_n\| \\ 
&=\limsup_n \sum_{i=1}^N \|d_i a(1-f_n){\tilde f_n}^{1/2}\| =0.
\end{align*}
Also, for any $x\in F$ we have 
\begin{align*}
 \limsup_{n\rightarrow\infty} \|s_n^* xs_n - \varphi(x) e_n \| 
&= \limsup_n \left\lVert \sum_{i,j=1}^N c_i^*(r_n^* {\tilde f_n}^{1/2}ad_i^* xd_j a {\tilde f_n}^{1/2} r_n - \omega(d_i^*xd_j) e_n)c_j \right\rVert \nonumber \\
&\leq \limsup_n \sum_{i,j=1}^N\left\lVert r_n^* {\tilde f_n}^{1/2} a d_i^* x d_j a{\tilde f_n}^{1/2} r_n - \omega(d_i^* xd_j)e_n\right\rVert \nonumber \\
&= \limsup_n  \sum_{i,j=1}^N \left\lVert r_n^* {\tilde f_n}^{1/2} (a d_i^* x d_j a - \omega(d_i^*xd_j)a^2){\tilde f_n}^{1/2} r_n \right\rVert \nonumber \\
&\leq  \limsup_n \sum_{i,j=1}^N \left\lVert a(d_i^* xd_j - \omega (d_i^*xd_j))a \right\rVert < \varepsilon. \nonumber
\end{align*}
\end{proof}
\begin{remark}
In the argument above, 
the assumption of strict comparison is used in the proof of Lemma \ref{LemBddrn}. 
But, it should be pointed out that 
we need much less than the full strength of strict comparison. 
Indeed, what we used in the proof of Lemma \ref{LemBddrn} is as follows: 
if $(e_n)_n$ and $(f_n)_n$ are sequences of positive contractions in $A$ 
satisfying 
\[
\lim_{n\to\infty}\max_{\tau\in T(A)}d_\tau(e_n)=0,\quad 
\liminf_{n\to\infty}\min_{\tau\in T(A)}d_\tau(f_n)>0, 
\]
then there exists a sequence $(r_n)_n$ in $A$ such that 
\[
\lim_{n\to\infty}\lVert r_n^*f_nr_n-e_n\rVert=0. 
\]
\end{remark}

\section{ \texorpdfstring{Proof of (ii) $\Rightarrow$ (iii) of Theorem \ref{ThmMain}}{Proof of (ii) $\Rightarrow$ (iii) of Theorem 1.1}}\label{Sec3}

In this section, we give a proof of (ii)$\Rightarrow$(iii) of Theorem \ref{ThmMain}, 
by using Proposition \ref{PropMain}. 
We begin with the following well-known fact.

\begin{lemma}\label{LemCPmap}
Let $A$ be a unital separable simple infinite-dimensional nuclear 
C$^*$-algebra, 
and let $\omega$ be a pure state of $A$. 
Then any completely positive map from $A$ to $A$ can be approximated 
in the pointwise norm topology 
by completely positive maps $\varphi$ of the form 
\[
\varphi(a)=\sum_{l=1}^N\sum_{i,j=1}^N
\omega(d_i^*ad_j)c_{l,i}^*c_{l,j},\quad a\in A,
\]
where $c_{l,i},d_i\in A$, $l,i=1,2,\dots,N$. 
\end{lemma}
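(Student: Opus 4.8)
The plan is to combine two standard facts: first, that nuclearity allows any completely positive map $\psi : A \to A$ to be approximated, in the pointwise norm topology, by maps that factor through matrix algebras; and second, that since $A$ is unital simple and infinite-dimensional and $\omega$ is a pure state, the GNS representation $\pi_\omega$ of $A$ is faithful and irreducible, so Kadison's transitivity theorem lets us reproduce any finite piece of a matrix-algebra computation inside $A$ using the state $\omega$. I would first fix a finite set $F \subseteq A$ and $\varepsilon > 0$, and use nuclearity to find $k \in \N$ and completely positive contractive maps $\sigma : A \to M_k$, $\rho : M_k \to A$ with $\lVert \rho \circ \sigma(a) - \psi(a)\rVert < \varepsilon$ for all $a \in F$ (after rescaling I may assume $\psi$ is contractive; the general case follows by scaling the output).

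Next I would unpack $\sigma$ and $\rho$ concretely. Writing $\{e_{ij}\}$ for the matrix units of $M_k$, the map $\sigma$ has the form $\sigma(a) = \big(\phi_{ij}(a)\big)_{ij}$ for some bounded functionals $\phi_{ij}$, and complete positivity of $\sigma$ means the matrix of functionals is "positive", so by a standard decomposition $\sigma(a) = \sum_{l} V_l^* \, \mathrm{diag}(\phi_l(a),\dots) \, V_l$ type expressions; more usefully, $\rho \circ \sigma$ is itself a completely positive map $A \to A$ that factors through $M_k$, hence by Stinespring has the form $a \mapsto \sum_{l=1}^{N} W_l^* \pi(a) W_l$ for a representation $\pi$ of $A$ on a finite-dimensional space (of dimension at most $Nk$ or so) and $W_l \in A \otimes (\text{finite rank})$. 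The key reduction: a finite-dimensional representation of $A$, evaluated on the finite set $F \cup \{d_i^* x d_j\}$, together with the vectors defining $W_l$, can be matched to $\omega$. Concretely, decompose the finite-dimensional $\pi$ into irreducibles; each irreducible summand, being finite-dimensional, is implemented on finitely many vectors, and by Glimm's lemma plus purity one reduces to the single pure state $\omega$; then Kadison transitivity produces elements $d_i \in A$ such that $\omega(d_i^* a d_j)$ reproduces, to within $\varepsilon$ on the finite set, the relevant matrix entries $\langle \pi(a) \xi_j, \xi_i\rangle$. Absorbing the coefficient vectors from the $W_l$ into elements $c_{l,i} \in A$ then yields the stated form $\varphi(a) = \sum_{l,i,j} \omega(d_i^* a d_j) c_{l,i}^* c_{l,j}$ approximating $\psi$ on $F$.

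The main obstacle I anticipate is the bookkeeping in passing from "$\psi$ factors through $M_k$" to the specific bilinear expression in a \emph{single} pure state $\omega$: one must show that a finite-dimensional representation's matrix coefficients, on a prescribed finite set, can be realized (approximately) by vector states coming from $\omega$'s GNS space. This is where simplicity and infinite-dimensionality enter — they guarantee $\pi_\omega$ is infinite-dimensional and faithful, so its restriction to any finite-dimensional subspace is "generic enough" to transitivity-approximate any other finite-dimensional representation on a finite set; and Glimm's lemma guarantees enough pure states with the required asymptotic behavior. Once that matching lemma is in hand, everything else is algebraic manipulation: rewriting sums of the form $\sum_l W_l^* \pi(a) W_l$ in terms of the matrix units of the finite-dimensional representation space and relabeling. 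I would present the matching step carefully and treat the relabeling as routine.
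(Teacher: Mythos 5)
Your opening move (nuclearity reduces to completely positive maps factoring through a matrix algebra, then handle the two legs separately) matches the paper, but the central step of your plan rests on a false statement: you write that the composite $\rho\circ\sigma$ has, ``by Stinespring,'' the form $a\mapsto\sum_l W_l^*\pi(a)W_l$ for a representation $\pi$ of $A$ on a \emph{finite-dimensional} space, and you then propose to decompose this $\pi$ into finite-dimensional irreducibles and match them to $\omega$. A unital simple infinite-dimensional C$^*$-algebra has no nonzero finite-dimensional representations at all (simplicity forces any nonzero representation to be faithful, and $A$ does not embed into a matrix algebra), and the Stinespring dilation of a completely positive map $A\to M_k$ lives on a generally infinite-dimensional Hilbert space. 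So the object your ``matching step'' is supposed to match to $\omega$ does not exist, and the argument as outlined cannot be carried through; the appeal to Glimm's lemma is also beside the point here, since $\omega$ is already assumed pure in the statement.

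What is needed in place of this step --- and what the paper does --- is Voiculescu's theorem: after normalizing so that the map $\rho:A\to M_N$ is unital, one uses the fact that the GNS representation $\pi_\omega$ of the pure state $\omega$ is irreducible and (because $A$ is unital, simple, infinite-dimensional) contains no nonzero compact operators, so $\rho$ is a pointwise norm limit of compressions $V_n^*\pi_\omega(\cdot)V_n$ by isometries $V_n:\C^N\to\cH_\omega$. Kadison's transitivity theorem then produces $d_{i,n}\in A$ with $\pi_\omega(d_{i,n})\xi_\omega=V_ne_i$, so that $\omega(d_{i,n}^*ad_{j,n})=(V_n^*\pi_\omega(a)V_ne_i\mid e_j)$ recovers the matrix entries of $\rho(a)$ up to a small error. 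Your intuition that ``matrix coefficients should be realized by vector states from $\omega$'s GNS space'' is exactly this, but the realization is a consequence of Voiculescu's theorem, not of transitivity applied to a finite-dimensional representation. For the other leg, the clean way to get the $c_{l,i}$ is to note that positivity of the matrix $[\sigma(e_{i,j})]_{i,j}\in M_N(A)$ yields $\sigma(e_{i,j})=\sum_{l=1}^N c_{l,i}^*c_{l,j}$; your ``absorb the coefficient vectors into elements $c_{l,i}$'' gestures at this but would need to be made precise.
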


\begin{proof} 
Let  $\rho : A\rightarrow M_N$ and $\sigma : M_N\rightarrow A$ be completely positive maps. 
Because $A$ is nuclear, any completely positive map is approximated by completely positive maps which factor through full matrix algebras. Thus it suffices to show that $\sigma\circ\rho$ can be approximated in the pointwise norm topology by completely positive maps $\varphi$ as in the lemma. Replacing $\rho$ and $\sigma$ by $\rho(1_A)^{-1/2}\rho(\cdot)\rho(1_A)^{-1/2}$ and $\sigma(\rho(1_A)^{1/2}\cdot \rho (1_A)^{1/2})$ with inverses taken in the respective hereditary subalgebra, we may assume that $\rho $ is unital.

We denote by $(\pi, \cH, \xi)$ the GNS representation associated with $\omega$. Since $A$ is  unital separable simple infinite-dimensional, $\pi(A)$ does not contain non-zero compact operators on $\cH$.  Applying Voiculescu's theorem (see  \cite[Theorem 1.7.8]{BO} for example)  to the unital completely positive map $\rho\circ\pi^{-1}: \pi(A) \rightarrow M_N$ we can find isometries $V_n :\C^N\rightarrow \cH$, $n\in\N$ such that 
\[ \lim_n \| \rho(a) - V_n^* \pi(a) V_n \| =0, \]
for any $a\in A$. Let $\{e_1, e_2,\dots,e_N\}$ be a basis for $\C^N$ and set $\xi_{i,n}= V_ne_i \in \cH$.  By Kadison's transitivity theorem
we obtain $d_{i,n}\in A$, $i=1,2,\dots,N$, $n\in\N$ such that $\pi(d_{i,n})\xi = \xi_{i,n}$. Then we have
\[ \omega(d_{i,n}^* ad_{j,n})=(\pi(a) \xi_{j,n} | \xi_{i,n})_{\cH} = (V_n^* \pi(a) V_n e_i | e_j),\]
for $i, j=1,2,\dots,N$, and $a\in A$, which implies 
\[ \lim_n \|\rho(a) - [ \omega(d_{i,n}^*ad_{j,n})]_{i,j} \| =0,\quad a\in A.\]

Let $e_{i,j}$ be the standard matrix units for $M_N$. 
Since $\sigma:M_N\rightarrow A$ is a completely positive map, 
the matrix $[\sigma(e_{i,j})]_{i,j}\in M_N(A)$ is positive 
(see \cite[Proposition 1.5.12]{BO} for example). 
Hence there exist $c_{l,j}\in A$, $l,j=1,2,\dots,N$ 
such that $\sigma(e_{i,j})=\sum_{l=1}^Nc_{l,i}^*c_{l,j}$. 
\end{proof}

The proof of the following lemma relies on A. Kishimoto's technique 
used in the proof of 2$\Rightarrow$1 of Theorem 4.5 in \cite{Kis}. 
For a state $\omega$ on $A$, 
we define the seminorm $\lVert\cdot\rVert_\omega$ 
by $\lVert a\rVert_\omega=\omega(a^*a)^{1/2}$ for $a\in A$.

\begin{lemma}\label{LemOrthogonality}
Let $\omega$ be a state on a unital separable C$^*$-algebra $A$ 
and let $k\in\N$. 
Let $(e_n)_n$ be a central sequence of positive contractions in $A$ and 
let $(u_n)_n$ be a central sequence of unitaries in $A$. 
If 
\[
\lim_{n\to\infty}\lVert\Ad u_n^i(e_n)e_n\rVert_\omega=0
\]
holds for every $i=1,2,\dots,k-1$, then 
there exists a central sequence $(e'_n)_n$ of positive contractions in $A$ 
such that 
\[
e'_n\leq e_n,\quad 
\lim_{n\to\infty}\omega(e_n-e'_n)=0,\quad\text{and}\quad 
\lim_{n\to\infty}\lVert\Ad u_n^i(e'_n)e'_n\rVert=0
\]
for every $i=1,2,\dots,k-1$. 
\end{lemma}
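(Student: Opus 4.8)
The plan is to pass from the $\lVert\cdot\rVert_\omega$-smallness hypothesis to a norm-smallness conclusion by cutting down $e_n$ with a carefully chosen positive contraction coming from the hereditary subalgebra generated by $e_n$, while controlling the trace $\omega(e_n-e'_n)$ of what gets thrown away. The key observation is that $\lVert\Ad u_n^i(e_n)e_n\rVert_\omega\to 0$ says that the elements $\Ad u_n^i(e_n)$ are ``almost orthogonal to $e_n$ in the GNS Hilbert space of $\omega$'' for $i=1,\dots,k-1$; by centrality of $(u_n)_n$ this almost-orthogonality is symmetric in $i$ and extends to all pairs $i\ne j$ in $\{0,1,\dots,k-1\}$ up to an error vanishing as $n\to\infty$.

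First I would fix, for each $n$, a continuous function $h_{n}\in C_0((0,1])$ of the form $h_n(t)=\max\{0,\delta_n^{-1}(t-\delta_n)\}$ with $\delta_n\downarrow 0$ chosen slowly enough, and set $e'_n=h_n(e_n)^{1/2}\,e_n\,h_n(e_n)^{1/2}$ or, more simply, a spectral-type cut so that $e'_n\le e_n$, $e'_n$ is supported (in functional calculus of $e_n$) away from $0$, and $\omega(e_n-e'_n)\to 0$ — the latter because $\omega(e_n-h_n(e_n)e_n)\le \omega(e_n\cdot 1_{[0,2\delta_n]}(e_n))\le 2\delta_n\to 0$ after an approximation, or more carefully by noting $\lVert e_n-e'_n\rVert_\omega^2\to0$ forces $\omega(e_n-e'_n)\to0$ since these are positive contractions. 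Since $(e_n)_n$ is central and $h_n$ can be taken with a uniform modulus adjusted to $n$, $(e'_n)_n$ remains a central sequence of positive contractions. The point of this cut is that $e'_n\le \lambda_n e_n^{2}$-type estimates are available — more precisely, one arranges $e'_n\le e'_n{}^{1/2}e_n e'_n{}^{1/2}$ and crucially that $e'_n$ lies in the hereditary subalgebra $\overline{e_nAe_n}$, so $e'_n=e_n g_n(e_n)$ for $g_n$ supported away from $0$.

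The heart of the matter is to convert $\lim_n\lVert\Ad u_n^i(e_n)e_n\rVert_\omega=0$ into $\lim_n\lVert\Ad u_n^i(e'_n)e'_n\rVert=0$. Here I would use Kishimoto's trick as advertised: because $e'_n$ sits in the hereditary algebra of $e_n$, one can write $\Ad u_n^i(e'_n)e'_n$ in a form controlled by $\Ad u_n^i(e_n)e_n$ together with commutators $[u_n^i, \cdot]$ that vanish in norm by centrality of $(u_n)_n$. The $\lVert\cdot\rVert_\omega$-hypothesis, however, only controls one ``slot''; to get a genuine norm bound one passes to the $C^*$-algebra $B_n$ generated by $e_n$ and its $k-1$ translates and exploits that the near-orthogonality relations $\lVert\Ad u_n^i(e_n)\,\Ad u_n^j(e_n)\rVert_\omega\to0$ for $i\ne j$, combined with a $k\times k$ ``matrix-unit-like'' positivity argument (the Cauchy–Schwarz/positivity inequality for the $\omega$-inner product and centrality to symmetrize), force the positive element $\sum_i \Ad u_n^i(e'_n)$ to have the $\Ad u_n^i(e'_n)$ almost mutually orthogonal in norm after the cut. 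Concretely, one shows $\lVert\Ad u_n^i(e'_n)e'_n\rVert^2=\lVert e'_n\,\Ad u_n^i(e'_n)^2\,e'_n\rVert$ is dominated, via the functional calculus relation $e'_n=e_n g_n(e_n)$ and Stinespring/positivity, by a constant (depending on $\lVert g_n\rVert$, hence on $\delta_n$) times an expression that is $O(\lVert\Ad u_n^i(e_n)e_n\rVert_\omega)$ plus a commutator error — and then one re-balances the rate at which $\delta_n\to0$ against the (known) rate at which the $\omega$-seminorms and the commutators go to zero, via a standard diagonal/reindexing argument, so that both $\omega(e_n-e'_n)\to0$ and $\lVert\Ad u_n^i(e'_n)e'_n\rVert\to0$ hold simultaneously.

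The main obstacle I anticipate is exactly this last balancing step: the bound relating the norm of $\Ad u_n^i(e'_n)e'_n$ to $\lVert\Ad u_n^i(e_n)e_n\rVert_\omega$ degrades like $\delta_n^{-1}$ (or worse) as the cut-off $\delta_n\to0$, while we simultaneously need $\delta_n\to0$ to guarantee $\omega(e_n-e'_n)\to0$; making these two requirements compatible requires that the convergence $\lVert\Ad u_n^i(e_n)e_n\rVert_\omega\to0$ be ``used up'' only partially at stage $n$, i.e. one chooses $\delta_n$ tending to $0$ slowly relative to the given (but not uniform) speed of the hypotheses. Since we are free to pass to subsequences and the conclusion is a statement about a central sequence (hence insensitive to such reindexing), I expect this to go through, but it is the place where care is needed. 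A secondary technical point is verifying that $(e'_n)_n$ stays central: this follows because $e'_n=g_n(e_n)^{1/2}e_n g_n(e_n)^{1/2}$ is a norm-limit-uniform functional-calculus expression in $e_n$, and functional calculus is norm-continuous uniformly on $[0,1]$ for the equicontinuous family $\{g_n\}$ once we fix a modulus — so $\lVert[a,e'_n]\rVert\to0$ for every $a\in A$ from $\lVert[a,e_n]\rVert\to0$.
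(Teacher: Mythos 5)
There is a genuine gap, and it lies exactly where you place the ``heart of the matter.'' Your cut-down element $e'_n$ is obtained by functional calculus in $e_n$ alone (a spectral cut of $e_n$ away from $0$, $e'_n=e_ng_n(e_n)$ with $g_n$ supported away from $0$). Such a cut cannot produce the required norm-smallness: if the $e_n$ happen to be projections, your construction gives $e'_n=e_n$ (the cut does nothing), yet the hypothesis $\lVert\Ad u_n^i(e_n)e_n\rVert_\omega\to0$ is perfectly compatible with $\lVert\Ad u_n^i(e_n)e_n\rVert=1$ for all $n$ --- state-seminorm smallness says nothing about operator-norm smallness. For the same reason the key inequality you invoke, namely that $\lVert\Ad u_n^i(e'_n)e'_n\rVert^2$ is dominated by a constant (depending only on the cut-off $\delta_n$) times $O(\lVert\Ad u_n^i(e_n)e_n\rVert_\omega)$ plus commutator errors, does not exist: no amount of centrality or Cauchy--Schwarz manipulation converts an $\omega$-seminorm bound into a norm bound, and the rebalancing of $\delta_n$ you describe cannot repair an inequality that fails for each fixed $n$. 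Notice also that your estimate $\omega(e_n-e'_n)\le 2\delta_n$ never uses the hypothesis at all, which signals that the hypothesis is being spent in the step that does not work.

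The missing idea is to cut down by functional calculus of the \emph{overlap} element rather than of $e_n$: set
\[
g_n=e_n^{1/2}\Bigl(\sum_{i=1}^{k-1}\Ad u_n^i(e_n)\Bigr)e_n^{1/2},\qquad
e'_{m,n}=e_n^{1/2}\bigl(1-f_m(g_n)\bigr)e_n^{1/2},
\]
with $f_m(t)=\min\{1,mt\}$. Then the roles of the two conclusions are the reverse of what you propose: the norm estimate is pure functional calculus, since $e'_{m,n}\le e_n$ gives
\[
\lVert\Ad u_n^i(e'_{m,n})e'_{m,n}\rVert^2
\le\Bigl\lVert e'_{m,n}\sum_{i=1}^{k-1}\Ad u_n^i(e_n)\,e'_{m,n}\Bigr\rVert
\le\lVert(1-f_m(g_n))g_n\rVert\le 1/m,
\]
while the hypothesis $\lVert\Ad u_n^i(e_n)e_n\rVert_\omega\to0$ is used only to show that the discarded mass is small in the state: $\omega(e_n-e'_{m,n})=\omega(e_n^{1/2}f_m(g_n)e_n^{1/2})\le m\,\omega(e_n^{1/2}g_ne_n^{1/2})\to0$ for each fixed $m$. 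A diagonal choice $m_n\to\infty$ (slow enough to preserve centrality and the state estimate) then finishes the proof. Your instinct that a cut-off and a rate-balancing diagonal argument are needed is right, but without cutting along the spectrum of $g_n$ the argument does not go through.
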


\begin{proof}
For $m\in\N$, we let $f_m$ denote the continuous function on $[0,\infty)$ 
defined by $f_m(t)=\min\{1,mt\}$. 
Define central sequences $(g_n)_n$ and $(e'_{m,n})_n$ by 
\[
g_n=e_n^{1/2}\left(\sum_{i=1}^{k-1}\Ad u_n^i(e_n)\right)e_n^{1/2}
\]
and 
\[
e'_{m,n}=e_n^{1/2}(1-f_m(g_n))e_n^{1/2}. 
\]
Note that $e'_{m,n}\leq e_n$ for any $m,n\in\N$. 
By the assumption of $e_n$ and $u_n$, for any $j\in\N$ it follows that 
\begin{align*}
\omega(e_n^{1/2}g_n^je_n^{1/2})
&\leq\lVert g_n\rVert^{j-1}\omega(e_n^{1/2}g_ne_n^{1/2})\\
&\leq(k-1)^{j-1}\sum_{i=1}^{k-1}\omega(e_n\Ad u_n^i(e_n)e_n)\\
&\leq(k-1)^{j-1}\sum_{i=1}^{k-1}\lVert\Ad u_n^i(e_n)e_n\rVert_{\omega}\to0,\quad 
n\to\infty. 
\end{align*} 
Then we have 
\[
\omega(e_n-e'_{m,n})
=\omega(e_n^{1/2}f_m(g_n)e_n^{1/2})\to0,\quad n\to\infty
\]
for any $m\in\N$. 
Furthermore, for $i=1,2,\dots,k-1$ we have 
\begin{align*}
\lVert\Ad u_n^i(e'_{m,n})e'_{m,n}\rVert^2
&\leq\lVert e'_{m,n}\Ad u_n^i(e'_{m,n})e'_{m,n}\rVert\\
&\leq\left\lVert e'_{m,n}
\sum_{i=1}^{k-1}\Ad u_n^i(e_n)e'_{m,n}\right\rVert\\
&=\left\lVert e_n^{1/2}(1-f_m(g_n))e_n^{1/2}
\sum_{i=1}^{k-1}\Ad u_n^i(e_n)e_n^{1/2}(1-f_m(g_n))e_n^{1/2}\right\rVert\\
&\leq\lVert(1-f_m(g_n))g_n\rVert<1/m. 
\end{align*}
Since $A$ is separable and $(e'_{m,n})_n$ is a central sequence, 
we can find an increasing sequence $(m_n)_n$ of natural numbers such that 
$m_n\to\infty$, $\omega(e_n-e'_{m_n,n})\to0$ and 
$(e'_{m_n,n})_n$ is a central sequence. 
Therefore $e'_n=e'_{m_n,n}$, $n\in\N$ satisfy the desired conditions. 
\end{proof}

In the proof of the following lemma, we use \cite[Lemma 2.1]{Sat2}. 
We remark that 
this lemma in \cite{Sat2} heavily depends on 
U. Haagerup's theorem \cite[Theorem 3.1]{Haa}, 
which says that any nuclear C$^*$-algebra has a virtual diagonal 
in the sense of B. E. Johnson \cite{Joh}. 

For the definition of order zero maps, 
the reader should see \cite[Section 1]{WZ}.

\begin{lemma}\label{LemOrderzeroCP}
Let $A$ be a unital separable simple infinite-dimensional nuclear C$^*$-algebra 
with finitely many extremal tracial states. 
For any $k\in\N$, 
there exist a completely positive contractive order zero map 
$\psi:M_k\to A_\infty$ and 
a central sequence $(c_n)_n$ of positive contractions in $A$ such that 
\[
\lim_{n\to\infty}\max_{\tau\in T(A)}\lvert \tau(c_n^m)-1/k\rvert=0
\]
for any $m\in\N$ and $\psi(e)=(c_n)_n$, 
where $e$ is a minimal projection in $M_k$. 
\end{lemma}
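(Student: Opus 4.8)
\textbf{Proof proposal for Lemma \ref{LemOrderzeroCP}.}
The plan is to build the order zero map $\psi:M_k\to A_\infty$ one generator at a time, using a central sequence of Rokhlin-type towers together with the trace-constancy furnished by the finite extremal trace hypothesis. First I would fix a minimal projection $e\in M_k$ and a system of matrix units $(e_{i,j})_{i,j=1}^k$, and aim to produce a central sequence $(c_n)_n$ of positive contractions playing the role of $\psi(e)$ together with central sequences of unitaries $(u_n)_n$ implementing the cyclic shift on the tower. The key points to arrange are: (a) the ``columns'' $\Ad u_n^{i}(c_n)$ for $i=0,1,\dots,k-1$ become pairwise orthogonal in the limit, which is exactly the order zero condition $\psi(e_{i,i})\psi(e_{j,j})=0$ for $i\neq j$, and (b) $\tau(c_n^m)\to 1/k$ uniformly over $T(A)$ for every $m$.

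The first step is to secure a single central sequence $(e_n)_n$ of positive contractions whose traces are approximately $1/k$; this is where I would invoke strict comparison (or rather the weaker divisibility-type consequence recorded in the Remark after Proposition \ref{PropMain}) to split the unit into $k$ roughly equal orthogonal-in-the-limit pieces inside $A_\infty$, using $T(A)$ having finitely many extremal points so that ``roughly equal'' can be made uniform: it suffices to control finitely many extremal traces and then extend by convexity. Concretely, because $A$ has finitely many extremal traces one can find, for each $m$, positive contractions whose $m$-th moments are uniformly close to $1/k$; a diagonal argument over $m$ then yields $(e_n)_n$ with $\max_{\tau}\lvert\tau(e_n^m)-1/k\rvert\to 0$ for all $m$ simultaneously. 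The second step is to manufacture the implementing unitaries $(u_n)_n$: since $A$ is simple, separable, infinite-dimensional and has the required regularity, one gets a central sequence of unitaries (approximately) cyclically permuting $k$ pieces; here I would appeal to the machinery built earlier, feeding Proposition \ref{PropMain} and Lemma \ref{LemBddrn} appropriate completely positive maps (via Lemma \ref{LemCPmap}) to excise and reposition mass. The third step is to apply Lemma \ref{LemOrthogonality}: the unitaries give $\lVert\Ad u_n^{i}(e_n)e_n\rVert_\omega\to 0$ for $i=1,\dots,k-1$ (for a suitable state $\omega$, e.g.\ a limit trace), and the lemma upgrades this to genuine orthogonality $\lVert\Ad u_n^{i}(e'_n)e'_n\rVert\to 0$ at the cost of shrinking $e_n$ to $e'_n$ with $\omega(e_n-e'_n)\to 0$. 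Taking $c_n=e'_n$ and $\psi(e_{i,j})=\big(u_n^{i}\,c_n^{1/2}\,(u_n^{*})^{j}\cdots\big)$ appropriately normalized gives the order zero map, and the trace loss $\omega(e_n-e'_n)\to 0$ (applied with $\omega$ ranging over the finitely many extremal traces, then uniformized) keeps $\tau(c_n^m)$ close to $1/k$.

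For the moment estimates I would note that $c_n^m\le e_n^m$ since $0\le c_n\le e_n\le 1$, so $\tau(c_n^m)\le\tau(e_n^m)\to 1/k$, while the lower bound comes from $\tau(e_n^m-c_n^m)\le m\,\tau(e_n-c_n)$ (a standard operator-monotonicity/telescoping inequality for contractions) together with $\tau(e_n-c_n)\to 0$ uniformly; since only finitely many extremal traces need to be controlled and $m\,\tau(e_n-c_n)\to 0$ for each fixed $m$, we obtain $\max_{\tau\in T(A)}\lvert\tau(c_n^m)-1/k\rvert\to 0$ for every $m$. A final diagonalization over $m$ (as in the end of the proof of Lemma \ref{LemOrthogonality}) lets the same sequence $(c_n)_n$ work for all $m$ at once.

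The main obstacle I expect is the \emph{simultaneous} control of all traces and all moments while also achieving orthogonality: each application of Lemma \ref{LemOrthogonality} is phrased with a single state $\omega$ and loses a bit of trace, so one must either run it against each of the finitely many extremal traces in turn (which is exactly why the finite-extremal-trace hypothesis is essential) or against a cleverly chosen convex combination, and then interleave these with the excision arguments that generate the unitaries $u_n$ without destroying centrality. Keeping all of this inside a single central sequence forces a somewhat delicate nested diagonal argument, and verifying that the resulting $\psi$ is genuinely completely positive, contractive and order zero — rather than merely so ``in moments'' — is the step that needs the most care.
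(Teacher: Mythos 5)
There is a genuine gap, and it sits exactly where the paper's proof does its real work. Your plan obtains the ``$k$ roughly equal pieces'' and the cyclic-shift unitaries $u_n$ by appealing to strict comparison and to the excision machinery (Proposition \ref{PropMain}, Lemma \ref{LemBddrn}, Lemma \ref{LemCPmap}). But strict comparison is not a hypothesis of Lemma \ref{LemOrderzeroCP}: the lemma is stated for nuclear algebras with finitely many extremal traces only, and it is invoked in the proof of (iv)$\Rightarrow$(i), where strict comparison is precisely what one is not allowed to assume (that would make the argument circular within Theorem \ref{ThmMain}). Moreover, even granting comparison, your proposal never actually produces the unitaries: excision in small central sequences yields elements $s_n$ with $s_n^*as_n\approx\varphi(a)e_n$, which is not a mechanism for building a central sequence of unitaries cyclically permuting $k$ orthogonal positive elements, nor for making all moments $\tau(e_n^m)$ close to $1/k$ uniformly in $\tau$. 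The paper's route is different and is the missing idea: pass to the GNS closure $A''$ with respect to the average $\tau$ of the finitely many extremal traces; by nuclearity (injectivity of $A''$) this is a finite direct sum of copies of the hyperfinite II$_1$ factor, hence McDuff, so one gets matrix units $E_{i,j,n}$ for $M_k$ that are approximately central in $\lVert\cdot\rVert_\tau$, with $\tau(E_{1,1,n})=1/k$ exactly; then the crucial transfer step \cite[Lemma 2.1]{Sat2} (resting on Haagerup's theorem) replaces $E_{1,1,n}$ and the shift unitary $U_n=\sum_iE_{i,i+1,n}$ by norm-central sequences $(e_n)_n$, $(u_n)_n$ in $A$ that are $\lVert\cdot\rVert_\tau$-close to them. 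Nothing in your proposal plays the role of this von Neumann algebra detour, and without it the construction of $(c_n)_n$ and $(u_n)_n$ remains unsubstantiated.

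The later stages of your outline do match the paper: Lemma \ref{LemOrthogonality} (applied with the trace $\tau$ as the state) upgrades the $\lVert\cdot\rVert_\tau$-orthogonality of $\Ad u_n^i(e_n)$ against $e_n$ to norm orthogonality at the cost of a trace-small perturbation, and the order zero map is then obtained (in the paper via \cite[Proposition 2.4]{RW}) from the data $(e_n)_n$, $(u_n)_n$; the finitely-many-extremal-traces hypothesis is used, as you suspected, to pass from control with respect to $\tau$ to uniform control over $T(A)$ via $\tau_i\leq N\tau$. One small technical caution in your moment estimate: $0\leq c_n\leq e_n$ does not give $c_n^m\leq e_n^m$ for $m\geq 2$ (powers are not operator monotone); the inequality $\lvert\tau(e_n^m-c_n^m)\rvert\leq m\,\tau(e_n-c_n)$ can still be salvaged for traces by a telescoping plus Cauchy--Schwarz argument, but as written it is not justified. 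The paper avoids this entirely by comparing $e_n^m$ directly with the projection $E_{1,1,n}$ in $\lVert\cdot\rVert_\tau$.
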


\begin{proof}
Let $\{\tau_1,\tau_2,\dots,\tau_N\}$ be the set of extremal points of $T(A)$. 
Set $\tau=N^{-1}\sum_{i=1}^N\tau_i$ and 
let $\pi$ be the GNS representation associated with $\tau\in T(A)$. 
Clearly $\tau_i$ and $\tau$ extend to tracial states on $\pi(A)''$. 
In what follows, we regard $A$ as a subalgebra of $\pi(A)''$ and omit $\pi$. 
Since $A$ is nuclear, 
$A''$ is isomorphic to the direct sum of 
$N$ copies of the AFD II$_1$-factor $\mathcal{R}$. 
In particular, $A''\bar\otimes\mathcal{R}\cong A''$. 
Hence we have a sequence of matrix units $E_{i,j,n}\in A''$ for $M_k$ 
such that 
\[
\lim_{n\to\infty}\lVert[E_{i,j,n},x]\rVert_\tau=0
\]
holds for any $x\in A''$. 
Define a unitary $U_n\in A''$ by 
\[
U_n=\sum_{i=1}^kE_{i,i+1,n}, 
\]
where $i+1$ is understood modulo $k$. 
By \cite[Lemma 2.1]{Sat2}, 
we can find a central sequence $(e_n)_n$ of positive contractions in $A$ and 
a central sequence $(u_n)_n$ of unitaries in $A$ such that 
\[
\lim_{n\to\infty}\lVert e_n-E_{1,1,n}\rVert_\tau=0\quad\text{and}\quad 
\lim_{n\to\infty}\lVert u_n-U_n\rVert_\tau=0. 
\]
Then we have 
\[
\lim_{n\to\infty}\lVert\Ad u_n^j(e_n)e_n\rVert_\tau=0
\]
for every $j=1,2,\dots,k-1$. 
From Lemma \ref{LemOrthogonality}, we may assume that $(e_n)_n$ and $(u_n)_n$ satisfy 
\[
\lim_{n\to\infty}\lVert\Ad u_n^j(e_n)e_n\rVert=0. 
\]
It follows from \cite[Proposition 2.4]{RW} that 
there exists a completely positive contractive order zero map 
$\psi:M_k\to A_\infty$ such that $\psi(e)=(e_n)_n$, 
where $e$ is a minimal projection in $M_k$. 
Because $\tau_i\leq N\tau$ for any $i=1,2,\dots,N$, 
one has 
\begin{align*}
\lvert\tau_i(e_n^m)-1/k\rvert
&=\lvert\tau_i(e_n^m-E_{1,1,n})\rvert\\
&\leq\lVert e_n^m-E_{1,1,n}\rVert_{\tau_i}^{1/2}\\
&\leq N^{1/2}\lVert e_n^m-E_{1,1,n}\rVert_\tau^{1/2}\to0,\quad n\to\infty
\end{align*}
for any $m\in\N$. 
The proof is completed. 
\end{proof}

\begin{lemma}\label{LemDecompositionfn}
Let $A$ be a unital separable simple infinite-dimensional 
nuclear C$^*$-algebra with $T(A)\neq\emptyset$. 
Suppose that the conclusion of Lemma \ref{LemOrderzeroCP} holds for $A$. 
Then for any central sequence $(f_n)_n$ of positive contractions in $A$ and 
any $k\in\N$, 
there exist central sequences $(f_{i,n})_n$, $i=1,2,\dots,k$ 
of positive contractions in $A$ 
such that $(f_nf_{i,n})_n=(f_{i,n})_n$, 
$(f_{i,n}f_{j,n})_n=0$ for $i\neq j$, and 
\[
\lim_{m\to\infty}\liminf_{n\to\infty}\min_{\tau\in T(A)}\tau(f_{i,n}^m)
=\lim_{m\to\infty}\liminf_{n\to\infty}\min_{\tau\in T(A)}\tau(f_n^m)/k. 
\]
\end{lemma}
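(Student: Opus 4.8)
The plan is to build the $f_{i,n}$ by cutting a suitable power of $f_n$ down with an order zero copy of $M_k$ sitting in the central sequence algebra. First I would invoke Lemma~\ref{Lemfn} to replace $(f_n)_n$ by a central sequence $(\tilde f_n)_n$ of positive contractions of the form $\tilde f_n=f_n^{\tilde m_n}$ with $\tilde m_n\to\infty$; then $\tilde f_n\le f_n$, $\lVert f_n\tilde f_n^{1/2}-\tilde f_n^{1/2}\rVert\to 0$, and $\lim_m\liminf_n\min_\tau\tau(\tilde f_n^m)=\lim_m\liminf_n\min_\tau\tau(f_n^m)=:c$. Writing $(e_{ij})_{i,j=1}^k$ for the standard matrix units of $M_k$, I would next unpack the conclusion of Lemma~\ref{LemOrderzeroCP} into its finite, quantitative form and run a reindexing argument over the separable C$^*$-subalgebra $C^*((\tilde f_n)_n)\subseteq A_\infty$ to obtain a completely positive contractive order zero map $\psi\colon M_k\to A_\infty$ together with $(c_n)_n\in A_\infty$ such that $\psi(e_{11})=(c_n)_n$, $\lim_n\max_\tau\lvert\tau(c_n^m)-1/k\rvert=0$ for all $m$, and $\psi(M_k)$ commutes with $C^*((\tilde f_n)_n)$ (hence with $x:=(\tilde f_n)_n$ and with $(\tilde f_n^{1/2})_n$). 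Picking representatives $c_{i,n}$ of $\psi(e_{ii})$, I then set $f_{i,n}:=\tilde f_n^{1/2}c_{i,n}\tilde f_n^{1/2}$.

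The easy relations are then quick: $(f_{i,n})_n$ is a central sequence of positive contractions since $f_{i,n}\le\tilde f_n\le 1$ and $f_{i,n}$ is a product of central sequences; $(f_nf_{i,n})_n=(f_{i,n})_n$ because $f_nf_{i,n}-f_{i,n}=(f_n\tilde f_n^{1/2}-\tilde f_n^{1/2})c_{i,n}\tilde f_n^{1/2}\to 0$; and $(f_{i,n}f_{j,n})_n=0$ for $i\ne j$ because $f_{i,n}f_{j,n}=\tilde f_n^{1/2}c_{i,n}\tilde f_nc_{j,n}\tilde f_n^{1/2}$ while $\lVert[\tilde f_n,c_{j,n}]\rVert\to 0$ and $\lVert c_{i,n}c_{j,n}\rVert\to 0$.

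The real content is the trace identity. Working inside $A_\infty$ and using that $c_{i,n}$ asymptotically commutes with $\tilde f_n^{1/2}$, we have $f_i:=(f_{i,n})_n=x\,\psi(e_{ii})$ with $x$ commuting with $\psi(M_k)$, so $f_i^m=x^m\psi(e_{ii})^m$. By the structure theorem for order zero maps \cite{WZ}, $\psi(\cdot)=h\,\rho(\cdot)$ with $h=\psi(1)\in A_\infty$ and $\rho\colon M_k\to M(C^*(\psi(M_k)))$ a $*$-homomorphism commuting with $h$; consequently $\psi(e_{ii})^m=h^{m-1}\psi(e_{ii})$, $\sum_{i=1}^k\psi(e_{ii})^m=h^m$, and $\psi(e_{ij})^*\psi(e_{ij})=h\psi(e_{jj})$, $\psi(e_{ij})\psi(e_{ij})^*=h\psi(e_{ii})$. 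For any tracial state $T$ on $A_\infty$ and any $m\ge 2$, putting $z:=h^{m-2}x^m$ (which commutes with $\psi(e_{ij})$) and using the trace property,
\[
T(f_i^m)=T\big(z\,h\,\psi(e_{ii})\big)=T\big(\psi(e_{ij})\,z\,\psi(e_{ij})^*\big)=T\big(z\,\psi(e_{ij})^*\psi(e_{ij})\big)=T\big(z\,h\,\psi(e_{jj})\big)=T(f_j^m),
\]
so all $T(f_i^m)$ agree; taking $x=1$ in this computation gives $T(\psi(e_{ii})^m)=T(\psi(e_{jj})^m)$, and for $T$ of the form $\lim_{l\to\omega}\tau_l(\,\cdot\,)$ the estimate $\lim_n\max_\tau\lvert\tau(c_n^2)-1/k\rvert=0$ forces $T(\psi(e_{ii})^2)=1/k$, so $T(h^2)=1$. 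Since $1-h^m\le\lceil m/2\rceil\,(1-h^2)$ and $x^m$ commutes with $h$, this yields $T\big(x^m(1-h^m)\big)\le\lceil m/2\rceil\,T(1-h^2)=0$, whence $\sum_iT(f_i^m)=T(x^mh^m)=T(x^m)$. Combining the last two displays, $T(f_i^m)=\tfrac1k\,T(x^m)$ for every tracial state $T$ on $A_\infty$ and every $m\ge 2$.

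Applying this with $T$ ranging over all tracial states $(a_n)_n\mapsto\lim_{l\to\omega}\tau_l(a_{n_l})$ on $A^\infty$ (for arbitrary sequences $(\tau_l)_l$ in $T(A)$, $n_l\uparrow\infty$, and a free ultrafilter $\omega$) upgrades the conclusion to uniformity in $\tau$, giving $\sup_\tau\lvert\tau(f_{i,n}^m)-\tfrac1k\tau(\tilde f_n^m)\rvert\to 0$ and hence $\liminf_n\min_\tau\tau(f_{i,n}^m)=\tfrac1k\liminf_n\min_\tau\tau(\tilde f_n^m)$ for every $m\ge 2$. As these quantities decrease in $m$, the $m=1$ term is the largest and may be ignored when taking $\lim_m$, so $\lim_m\liminf_n\min_\tau\tau(f_{i,n}^m)=\tfrac1k\lim_m\liminf_n\min_\tau\tau(\tilde f_n^m)=c/k$ by Lemma~\ref{Lemfn}. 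I expect the symmetry step $T(f_i^m)=T(f_j^m)$ to be the main obstacle: orthogonality of the $c_{i,n}$ and the values $\tau(c_{i,n}^m)\to 1/k$ do not by themselves determine the trace of the individual pieces, and one must genuinely exploit the order zero structure — the transition elements $\psi(e_{ij})\in A_\infty$ and the trace property — together with the prior commutation of $\psi(M_k)$ with $C^*((\tilde f_n)_n)$, whose arrangement via reindexing is the other point requiring care.
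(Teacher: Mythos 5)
Your argument is correct, and it reaches the conclusion by a genuinely different implementation of the key trace estimate than the paper's. The paper works entirely at the finite, quantitative level: it fixes $F$, $\varepsilon$, $N$, takes a fixed large power $l$ (so that $f_n^l$ nearly absorbs $f_n$ and $\tau(f_n^{2lm})$ is already close to the limit $c$ for $m\le N$), extracts from the conclusion of Lemma \ref{LemOrderzeroCP} almost-central, almost-orthogonal positive contractions $e_i$ with $\max_\tau\lvert\tau(e_i^m)-1/k\rvert$ small, sets $f_{i,n}=f_n^le_if_n^l$, and gets the trace condition from the estimate $\limsup_n\max_\tau\lvert\tau(f_n^{lm}e_i^mf_n^{lm})-\tau(f_n^{2lm})/k\rvert\le 2\max_\tau\lvert\tau(e_i^m)-k^{-1}\rvert$ of \cite[Lemma 4.6]{MS}. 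You instead compress by $\tilde f_n^{1/2}\cdot\tilde f_n^{1/2}$ with $\tilde f_n$ from Lemma \ref{Lemfn}, arrange (by reindexing) that the order zero copy of $M_k$ commutes exactly with $(\tilde f_n)_n$ in $A^\infty$, and prove the exact identity $T(f_i^m)=\tfrac1k T(x^m)$ for all limit traces $T$, using the transition elements $\psi(e_{ij})$, the structure $\psi=h\rho(\cdot)$, traciality, and $T(h^2)=1$; uniformity over $T(A)$ then follows by the ultrafilter/compactness upgrade, which is sound since $T(A)$ is weak-$*$ compact. What your route buys: it avoids \cite[Lemma 4.6]{MS} and the $\varepsilon$-bookkeeping, and your symmetry step $T(\psi(e_{ii})^m)=T(\psi(e_{jj})^m)$ makes explicit why all $k$ diagonal pieces carry equal trace, a point the paper absorbs into its use of Lemma \ref{LemOrderzeroCP} (whose stated conclusion controls only one minimal projection). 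What it costs: you must genuinely use the off-diagonal order zero data and carry out the reindexing so that all of $\psi(M_k)$ (not just the diagonal) commutes with $(\tilde f_n)_n$ while preserving the trace condition; this is standard (the order zero relations are weakly stable, so a diagonal choice of indices works, exactly as in the paper's use of \cite[Proposition 2.4]{RW} and \cite[Proposition 2.1]{Sat}), but it is the step you should write out in full, as you yourself note.
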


\begin{proof}
Set $c=\lim_m\liminf_n\min_\tau\tau(f_n^m)$. 
Take a finite subset $F\subset A$, $\varepsilon>0$, and $N\in\N$ arbitrarily. 
It suffices to show that 
there exist sequences $(f_{i,n})_n$, $i=1,2,\dots,k$ 
of positive contractions in $A$ 
satisfying $\limsup_n\lVert[f_{i,n},a]\rVert<\varepsilon$ for each $a\in F$, 
$\limsup_n\lVert f_nf_{i,n}-f_{i,n}\rVert<\varepsilon$, 
$\limsup_n\lVert f_{i,n}f_{j,n}\rVert<\varepsilon$ for $i\neq j$, and 
\[
\left\lvert\liminf_{n\to\infty}\min_{\tau\in T(A)}\tau(f_{i,n}^m)-c/k\right\lvert<\varepsilon
\]
for any $m\leq N$.
Let $l\in\N$ be such that 
$\lvert (t-1)t^l\rvert<\varepsilon$ for $t\in[0,1]$ and 
\[
\left\lvert\liminf_{n\to\infty}\min_{\tau\in T(A)}\tau(f_n^{2lm})
-c\right\rvert<\varepsilon/2 
\]
for any $m\leq N$. 
Because we assumed that the conclusion of Lemma \ref{LemOrderzeroCP} holds for $A$, 
we obtain positive contractions $e_i\in A$, $i=1,2,\dots,k$ such that 
$\lVert[e_i,a]\rVert<\varepsilon$ for $a\in F$, 
$\lVert e_ie_j\rVert<\varepsilon$ for $i\neq j$, and 
$\max_\tau\lvert\tau(e_i^m)-1/k\rvert<\varepsilon/4$ for any $m\leq N$. 

Set $f_{i,n}=f_n^le_if_n^l$, $i=1,2,\dots,k$. 
Clearly it follows that 
$\limsup_n\lVert[f_{i,n},a]\rVert<\varepsilon$ for $a\in F$ and 
$\lVert f_nf_{i,n}-f_{i,n}\rVert\leq\lVert f_nf_n^l-f_n^l\rVert<\varepsilon$ 
for $n\in\N$. 
For $i\neq j$ we have 
$\limsup_n\lVert f_{i,n}f_{j,n}\rVert
\leq\limsup_n\lVert e_if_n^{2l}e_j\rVert<\varepsilon$. 
By \cite[Lemma 4.6]{MS}, we have 
\[
\limsup_{n\to\infty}\max_{\tau\in T(A)}\lvert\tau(f_n^{lm}e_i^mf_n^{lm})-\tau(f_n^{2lm})/k\rvert
\leq2\max_\tau\lvert\tau(e_i^m)-k^{-1}\rvert<\varepsilon/2 
\]
for any $m\leq N$. Since $\lVert f_{i,n}^m-f_n^{lm}e_i^mf_n^{lm}\rVert\to0$ as $n\to\infty$, 
we conclude that 
\begin{align*}
\left\lvert\liminf_{n\to\infty}\min_{\tau\in T(A)}\tau(f_{i,n}^m)-c/k\right\lvert
&=\lim_{N\to\infty}\left\lvert\inf_{n>N}\min_\tau\tau(f_{i,n}^m)-c/k\right\lvert\\
&\leq\limsup_N\left\lvert\inf_{n>N}\min_\tau\tau(f_n^{lm}e_i^mf_n^{lm})
-\inf_{n>N}\min_\tau\tau(f_n^{2lm})/k\right\rvert\\
&\quad+\lim_N\left\lvert\inf_{n>N}\min_\tau\tau(f_n^{2lm})/k
-c/k\right\lvert\\
&<\varepsilon/2+\varepsilon/2k\leq\varepsilon 
\end{align*}
for any $m\leq N$.
\end{proof}

We are now ready to prove (ii)$\Rightarrow$(iii) of Theorem \ref{ThmMain}. 
\begin{proof}[Proof of (ii)$\Rightarrow$(iii) of Theorem \ref{ThmMain}.]
Let $\varphi$ be a completely positive map from $A$ to $A$. 
We would like to show that 
$\varphi$ can be excised in small central sequences. 
Let $(e_n)_n$, $(f_n)_n$ be as in Definition \ref{DefExcision}. 
By Lemma \ref{LemCPmap} 
we may assume that there exist a pure state $\omega$ on $A$ and 
$c_{l,i},d_i\in A$, $l,i=1,2,\dots,N$, such that 
\[
\varphi(a)=\sum_{l=1}^N\sum_{i,j=1}^N
\omega(d_i^*ad_j)c_{l,i}^*c_{l,j},\quad a\in A. 
\]
Set 
$\displaystyle\varphi_l(a)=\sum_{i,j=1}^N\omega(d_i^*ad_j)c_{l,i}^*c_{l,j}$, 
$a\in A$ so that $\varphi=\varphi_1+\varphi_2+\dots+\varphi_N$. 

Applying Lemma \ref{LemDecompositionfn} to $(f_n)_n$, 
we have central sequences $(f_{l,n})_n$, $l=1,2,...,N$, 
of positive contractions in $A$ satisfying 
$(f_{l,n}f_n)_n=(f_{l,n})_n$, $(f_{l,n}f_{l',n})_n=0$ for $l\neq l'$, and 
\[
\lim_{m\to\infty}\liminf_{n\to\infty}
\min_{\tau\in T(A)}\tau(f_{l,n}^m)>0. 
\]
Applying Proposition \ref{PropMain} to 
$\varphi_l$, $(e_n)_n$, and $(f_{l,n})_n$, 
we obtain a sequence $(s_{l,n})_n$ in $A$ such that 
\[
\lim_{n\to\infty}\lVert s_{l,n}^*as_{l,n}-\varphi_l(a)e_n\rVert=0,\ a\in A,\quad 
\lim_{n\to\infty}\lVert f_{l,n}s_{l,n}-s_{l,n}\rVert=0. 
\]
We define $\displaystyle s_n=\sum_{l=1}^Ns_{l,n}$ for $n\in\N$. 
Since $\limsup_n\lVert s_{l,n}\rVert\leq\lVert\varphi_l(1)\rVert$, 
it follows that 
\begin{align*}
\lVert f_ns_n-s_n\rVert
&\leq\sum_{l=1}^N\lVert f_ns_{l,n}-s_{l,n}\rVert\\
&\leq\sum_{l=1}^N\lVert f_n\rVert\cdot\lVert s_{l,n}-f_{l,n}s_{l,n}\rVert
+\lVert f_nf_{l,n}-f_{l,n}\rVert\cdot\lVert s_{l,n}\rVert\to0,\quad n\to\infty. 
\end{align*}
If $\l\neq l'$, then 
\[
\lim_{n\to\infty}\lVert s_{l,n}^*as_{l',n}\rVert
=\lim_{n}\lVert s_{l,n}^*f_{l,n}af_{l',n}s_{l',n}\rVert=0
\]
for any $a\in A$. 
Therefore we conclude that 
\[
\lim_{n\rightarrow\infty}\lVert s_n^*as_n-\varphi(a)e_n\rVert
=\lim_n
\left\lVert\sum_{l=1}^N s_{l,n}^*as_{l,n}-\varphi_l(a)e_n\right\rVert=0. 
\]
\end{proof}

\section{\texorpdfstring{Proof of (iii)$\Rightarrow$(iv)$\Rightarrow$(i) of Theorem \ref{ThmMain}}{Proof of (iii) $\Rightarrow$ (iv) $\Rightarrow$ (i) of Theorem 1.1}}\label{Sec4}
In this section 
we prove (iii)$\Rightarrow$(iv)$\Rightarrow$(i) of Theorem 1.1. 
First, 
let us recall the definition of property (SI) from \cite{MS}. 

\begin{definition}[{\cite[Definition 4.1]{MS}}]\label{DefSI}
Let $A$ be a separable C$^*$-algebra with $T(A)\neq\emptyset$. 
We say that $A$ has {\it property (SI)} 
when for any central sequences $(e_n)_n$ and $(f_n)_n$ 
of positive contractions in $A$ satisfying 
\[
\lim_{n\to\infty}\max_{\tau\in T(A)}\tau(e_n)=0,\quad 
\lim_{m\to\infty}\liminf_{n\to\infty}\min_{\tau\in T(A)}\tau(f_n^m)>0, 
\]
there exists a central sequence $(s_n)_n$ in $A$ such that 
\[
\lim_{n\to\infty}\lVert s_n^*s_n-e_n\rVert=0,\quad 
\lim_{n\to\infty}\lVert f_ns_n-s_n\rVert=0. 
\]
\end{definition}

\begin{proof}[Proof of (iii)$\Rightarrow$(iv) of Theorem \ref{ThmMain}.]
Let $(e_n)_n$ and $(f_n)_n$ be as in Definition \ref{DefSI}. 
By the assumption of (iii), 
$\id_A$ can be excised in small central sequences. 
Thus we have $s_n\in A$, $n\in\N$ such that 
$\lVert s_n^*as_n-ae_n\rVert\to0$ for any $a\in A$ and 
$\lVert f_ns_n-s_n\rVert\to0$. 
Since $A$ is unital, we get $\lVert s_n^*s_n-e_n\rVert\to0$. 
Also, for any $a\in A$ we obtain 
\[
\limsup_{n\to\infty}\lVert[s_n,a]\rVert^2
=\limsup_{n\to\infty}
\lVert a^*s_n^*s_na-a^*s_n^*as_n-s_n^*a^*s_na+s_n^*a^*as_n\rVert=0, 
\]
which means that $(s_n)_n$ is central. 
\end{proof}

\begin{proof}[Proof of (iv)$\Rightarrow$(i) of Theorem \ref{ThmMain}.]
By Lemma \ref{LemOrderzeroCP}, 
we obtain central sequences $(c_{i,n})_n$ in $A$, $i=1,2,\dots,k$, 
such that $(c_{i,n}c_{j,n}^*)_n=\delta_{i,j}(c_{1,n}^2)_n$,
\[
\lim_{n\to\infty}\max_{\tau\in T(A)}
\lvert\tau(c_{1,n}^m)-1/k\lvert=0,\quad m\in\N, 
\]
and $c_{1,n}$ is a positive contraction for all $n\in\N$. 
Let $(e_n)_n$ be a central sequence of positive contractions in $A$
such that $\displaystyle (e_n)_n=(1-\sum_{i=1}^kc_{i,n}^*c_{i,n})_n$. 
Then we have 
\[
\limsup_{n\to\infty}\max_{\tau\in T(A)}\tau(e_n)
=\limsup_n\max_\tau\tau(1-\sum_{i=1}^kc_{i,n}^*c_{i,n})
=\limsup_n\max_\tau1-k\tau(c_{1,n}^2)=0
\]
and 
\[
\lim_{m\to\infty}\liminf_{n\to\infty}\min_{\tau\in T(A)}\tau(c_{1,n}^m)=1/k>0. 
\]
Thanks to property (SI), 
we obtain a central sequence $(s_n)_n$ in $A$ such that 
$(s_n^*s_n+\sum c_{i,n}^*c_{i,n})_n=1$ and $(c_{1,n}s_n)_n=(s_n)_n$, 
which means that $\{(c_{i,n})_n\}_{i=1}^k\cup\{(s_n)_n\}\subset A_\infty$ 
satisfies relation $\mathcal{R}_k$ defined in \cite[Section 2]{Sat}. 
It follows from \cite[Proposition 2.1]{Sat} that 
there exists a unital homomorphism 
from the prime dimension drop algebra $I(k,k+1)$ to $A_\infty$. 
The Jiang-Su algebra $\mathcal{Z}$ is 
an inductive limit of such $I(k,k+1)$'s. 
By \cite[Proposition 2.2]{TW}, 
we can conclude that $A\otimes\mathcal{Z}\cong A$. 
\end{proof}

In the same way as the proof above, we can show the following. 
Notice that 
we do not need the assumption of finitely many extremal traces 
for this theorem. 

\begin{theorem}\label{ThmSI}
Let $A$ be a unital separable simple infinite-dimensional 
 nuclear C$^*$-algebra with $T(A)\neq\emptyset$. 
Suppose that $A$ is $\mathcal{Z}$-absorbing. 
Then any completely positive map from $A$ to $A$ 
can be excised in small central sequences. 
Moreover, $A$ has property (SI). 
\end{theorem}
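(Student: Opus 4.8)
The plan is to run the arguments of Sections \ref{Sec2}--\ref{Sec4} but replace the use of strict comparison by $\mathcal{Z}$-absorption throughout. The key observation is that the only place where strict comparison entered was in Lemma \ref{LemBddrn} (as highlighted in the Remark following Proposition \ref{PropMain}): we merely needed that for central sequences $(e_n)_n$, $(f_n)_n$ of positive contractions with $\lim_n\max_\tau d_\tau(e_n)=0$ and $\liminf_n\min_\tau d_\tau(f_n)>0$ there is a sequence $(r_n)_n$ in $A$ with $\lVert r_n^* f_n r_n - e_n\rVert\to 0$. So the first step is to establish this comparison-type statement from $\mathcal{Z}$-absorption alone. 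Since $A\cong A\otimes\mathcal{Z}$ and $\mathcal{Z}$ has strict comparison, $A\otimes\mathcal{Z}$ has strict comparison by \cite[Corollary 4.6]{Ror} (the (i)$\Rightarrow$(ii) direction of Theorem \ref{ThmMain}, which needs no hypothesis on extremal traces); hence $A$ itself has strict comparison. Therefore Proposition \ref{PropMain} applies verbatim to $A$, and the proof of (ii)$\Rightarrow$(iii) of Theorem \ref{ThmMain} gives the first assertion — provided we can still supply the input of Lemma \ref{LemDecompositionfn}, i.e. the conclusion of Lemma \ref{LemOrderzeroCP}, without the finiteness assumption on extremal traces.

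This is where the $\mathcal{Z}$-absorption hypothesis does genuine work in place of the von Neumann algebra argument in Lemma \ref{LemOrderzeroCP}. Instead of passing to $\pi(A)''$ and using that it is a finite direct sum of copies of $\mathcal{R}$, I would argue directly: fix $k\in\N$ and use a unital embedding of $M_k$ (or more precisely of a generating matrix-unit system obtainable from a dimension-drop building block) into $\mathcal{Z}_\infty\subset (A\otimes\mathcal{Z})_\infty = A_\infty$, or alternatively invoke that $\mathcal{Z}$ contains, for each $k$, a c.p.c.\ order zero map $\psi:M_k\to\mathcal{Z}_\infty$ with $\psi(1)$ having the property that its powers are traced $1/k$ by every trace; tensoring with $1_A$ and absorbing $\mathcal{Z}$ back into $A$ gives the required $\psi:M_k\to A_\infty$ and central sequence $(c_n)_n=\psi(e)$ with $\lim_n\max_{\tau\in T(A)}\lvert\tau(c_n^m)-1/k\rvert=0$ for all $m$. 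The point is that traces on $A\otimes\mathcal{Z}$ are exactly $\tau\otimes\tau_{\mathcal{Z}}$, and $\tau_{\mathcal{Z}}$ assigns $1/k$ to the powers of the relevant element, uniformly; no restriction on $T(A)$ is needed. With this version of Lemma \ref{LemOrderzeroCP} in hand, Lemma \ref{LemDecompositionfn} goes through unchanged, and so does the proof of (ii)$\Rightarrow$(iii) of Theorem \ref{ThmMain}; this yields the first sentence of Theorem \ref{ThmSI}. The second sentence, property (SI), then follows by the proof of (iii)$\Rightarrow$(iv) of Theorem \ref{ThmMain}, which as noted uses neither strict comparison nor finiteness of extremal traces — one applies excision of $\id_A$ to $(e_n)_n$, $(f_n)_n$ as in Definition \ref{DefSI}, uses unitality to get $\lVert s_n^*s_n-e_n\rVert\to 0$, and checks $(s_n)_n$ is central via the norm computation $\lVert[s_n,a]\rVert^2\to 0$.

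I expect the main obstacle to be precisely the $\mathcal{Z}$-absorbing substitute for Lemma \ref{LemOrderzeroCP}: making sure that the c.p.c.\ order zero map $M_k\to A_\infty$ one extracts from $\mathcal{Z}$ really does have the uniform trace values $\lvert\tau(c_n^m)-1/k\rvert\to 0$ for \emph{all} $m\in\N$ and all $\tau\in T(A)$ simultaneously, rather than just the projection-like behaviour one would get for $m=1$. The cleanest route is probably to take inside $\mathcal{Z}$ a sequence of honest matrix-unit systems $(E^{(n)}_{i,j})$ for $M_k$ with $E^{(n)}_{1,1}$ genuine projections of trace $1/k$ (available since $\mathcal{Z}$ has a unique trace and strict comparison, or simply from the standard presentation of $\mathcal{Z}$ as a limit of dimension-drop algebras), so that $c_n := E^{(n)}_{1,1}$ is already a projection and $\tau(c_n^m)=\tau(c_n)=1/k$ exactly for every $m$ and every $\tau\in T(A\otimes\mathcal{Z})$; centrality in $A_\infty$ comes from these systems being asymptotically central in $\mathcal{Z}_\infty$ and hence in $A_\infty$. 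Everything else is a transcription of the earlier sections with "strict comparison" and "finitely many extremal traces" both deleted from the hypotheses and replaced by the single hypothesis "$\mathcal{Z}$-absorbing."
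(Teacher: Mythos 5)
Your main line of argument is exactly the paper's proof: get strict comparison from \cite[Corollary 4.6]{Ror}, note that the conclusion of Lemma \ref{LemOrderzeroCP} holds for $\mathcal{Z}$ (it has a unique trace, so the lemma applies verbatim) and transfer it to $A\cong A\otimes\mathcal{Z}$ via $x\mapsto 1_A\otimes\psi(x)$, using that every $\tau\in T(A\otimes\mathcal{Z})$ restricts on $1\otimes\mathcal{Z}$ to the unique trace $\tau_{\mathcal{Z}}$; then rerun the proofs of (ii)$\Rightarrow$(iii) and (iii)$\Rightarrow$(iv) of Theorem \ref{ThmMain}, neither of which needs finitely many extremal traces once this input is available. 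That part is correct and is essentially the published argument.

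However, the ``cleanest route'' you propose in the last paragraph does not work. The Jiang--Su algebra is projectionless, and so are the prime dimension drop algebras $I(k,k+1)$, so there are no matrix unit systems in $\mathcal{Z}$ with $E^{(n)}_{1,1}$ a genuine projection of trace $1/k$; likewise there is no unital embedding of $M_k$ into $\mathcal{Z}_\infty$, since such an embedding would give $\mathcal{Z}\cong\mathcal{Z}\otimes M_{k^\infty}\cong M_{k^\infty}$, contradicting $K_0(\mathcal{Z})\cong\Z$ with trivial projections. This is precisely why the paper works with completely positive contractive order zero maps rather than honest matrix units, and why the uniform control of $\tau(c_n^m)$ for \emph{all} powers $m$ (your own worry) is obtained through the von Neumann algebra argument of Lemma \ref{LemOrderzeroCP} applied to $\mathcal{Z}$ itself, not through projections. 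So you should delete the projection-based shortcut and rely on your stated fallback, which coincides with the paper's proof.
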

\begin{proof}
By \cite[Corollary 4.6]{Ror}, $A$ has strict comparison. 
Since $\mathcal{Z}$ is a unital separable simple infinite-dimensional 
 nuclear C$^*$-algebra with a unique trace, 
Lemma \ref{LemOrderzeroCP} is valid for $\mathcal{Z}$. 
Hence the conclusion of Lemma \ref{LemOrderzeroCP} also holds 
for $A\cong A\otimes\mathcal{Z}$. 
Then the proof of (ii)$\Rightarrow$(iii) of Theorem \ref{ThmMain} (see Section 3) 
works for $A$, and whence any completely positive map from $A$ to $A$ 
can be excised in small central sequences. 
By the proof of (iii)$\Rightarrow$(iv) of Theorem \ref{ThmMain}, 
we can conclude that $A$ has property (SI). 
\end{proof}

\section{C$^*$-algebras with tracial rank zero}\label{Sec5}
In this section we prove that 
any unital separable simple nuclear infinite-dimensional C$^*$-algebra 
with tracial rank zero 
is approximately divisible (Theorem \ref{TR0>appdv}). 

\begin{lemma}\label{TR0>centralMk}
Let $A$ be a unital separable simple infinite-dimensional C$^*$-algebra 
with tracial rank zero and let $k\in\N$. 
There exists a sequence $(\varphi_n)_n$ of homomorphisms 
from $M_k$ to $A$ such that 
$(\varphi_n(x))_n$ is a central sequence for any $x\in M_k$ and 
\[
\lim_{n\to\infty}\max_{\tau\in T(A)}\tau(1-\varphi_n(1))=0. 
\]
\end{lemma}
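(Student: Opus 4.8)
The plan is to exploit the defining property of tracial rank zero directly: for any finite subset $F\subset A$, any $\varepsilon>0$, and any nonzero positive element $b\in A$, there is a finite-dimensional subalgebra $B\subset A$ with unit $p=1_B$ such that $\lVert[p,a]\rVert<\varepsilon$ and $\mathrm{dist}(pap,B)<\varepsilon$ for all $a\in F$, and moreover $1-p$ is Murray--von Neumann equivalent to a subprojection of the hereditary algebra generated by $b$. Since $A$ is simple and infinite-dimensional, we may choose $b$ with $d_\tau(b)$ uniformly small, so that $\tau(1-p)$ can be made uniformly small over $T(A)$; this is where I would use simplicity together with the comparison built into tracial rank zero. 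The first step is thus to record this quantitative statement and, by a standard diagonal argument over an increasing exhausting sequence of finite sets $F_n$ and $\varepsilon_n\to0$, to obtain a sequence of finite-dimensional subalgebras $B_n\subset A$ with units $p_n$ satisfying $\lVert[p_n,a]\rVert\to0$ for all $a\in A$ and $\max_{\tau\in T(A)}\tau(1-p_n)\to0$.

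The second step is to upgrade the finite-dimensional subalgebras $B_n$ to unital copies of the single matrix algebra $M_k$. Here $B_n=\bigoplus_j M_{k_{j,n}}$ need not contain a unital copy of $M_k$, so I would instead arrange, again using tracial rank zero, that each $B_n$ contains a unital subalgebra isomorphic to $M_k$ \emph{up to a small correction}: writing $k_{j,n}=k q_{j,n}+r_{j,n}$ with $0\le r_{j,n}<k$, inside $M_{k_{j,n}}$ one finds a unital copy of $M_k\otimes M_{q_{j,n}}$ supported on a projection of corank $r_{j,n}<k$; the total ``lost'' corank, summed against any trace, is controlled by $\max_\tau \tau(1-p_n)$ plus a term that is small because the relative sizes of the blocks with $q_{j,n}=0$ must themselves be small (again by choosing $b$ small and using comparison). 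Taking the diagonal embedding $M_k\hookrightarrow M_k\otimes M_{q_{j,n}}\subset B_n\subset A$ gives a homomorphism $\varphi_n\colon M_k\to A$ (non-unital into $A$) whose image commutes with $A$ asymptotically and whose complementary projection $1-\varphi_n(1)$ has $\max_{\tau\in T(A)}\tau(1-\varphi_n(1))\to0$.

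Finally I would check the two asserted conclusions. Asymptotic centrality of $(\varphi_n(x))_n$ for every $x\in M_k$ follows from $\lVert[p_n,a]\rVert\to0$ together with the near-inclusion $\mathrm{dist}(p_nap_n,B_n)\to0$ and the fact that the matrix units of the copy of $M_k$ sit inside $B_n$ with controlled norms: a short computation shows $\lVert[\varphi_n(e_{st}),a]\rVert\to0$ for the generators $e_{st}$, hence for all $x\in M_k$. The trace estimate is exactly what was arranged in the construction. The main obstacle I anticipate is the bookkeeping in the second step: passing from an \emph{arbitrary} finite-dimensional subalgebra to a unital copy of a \emph{fixed} $M_k$ requires that the blocks of dimension not divisible by $k$ (and in particular the tiny blocks) contribute negligibly to every trace, and making this uniform over all of $T(A)$ is the delicate point — it is handled by feeding a sufficiently small positive element $b$ into the tracial-rank-zero approximation and invoking comparison of positive elements, but the estimates must be set up carefully so that nothing depends on the (possibly infinite) number of blocks of $B_n$.
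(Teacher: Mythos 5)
Your Step 1 is fine and matches the paper's setup, but the construction in Steps 2--3 has a genuine gap: you place the copy of $M_k$ \emph{inside} the finite-dimensional algebra $B_n$, and such a copy cannot be asymptotically central. The tracial rank zero approximation only gives $\lVert[p_n,a]\rVert\to0$ and $\mathrm{dist}(p_nap_n,B_n)\to0$; it gives no commutation whatsoever between elements of $B_n$ and elements of $A$. Concretely, if $b_n\in B_n$ approximates $p_nap_n$, your argument needs $[\varphi_n(x),b_n]$ to be small, i.e.\ the image of $M_k$ must approximately commute with essentially everything in $B_n$ that approximates compressions of $A$. In the simplest example, $A$ a UHF algebra with $B_n=M_{2^n}$ the standard nested subalgebras and $p_n=1$, every element of $B_n$ is of the form $p_nap_n$ for some $a\in A$, so $\varphi_n(M_k)\subset B_n$ would have to approximately commute with all of $B_n$ --- taking $a=\varphi_n(e_{12})$ gives $\lVert[\varphi_n(e_{11}),a]\rVert=1$, so no choice of embedding (diagonal or otherwise) can work. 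The block-counting and trace bookkeeping you worry about is therefore not the delicate point; the approach fails earlier, at centrality.

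The paper's proof avoids this by putting the copy of $M_k$ into the \emph{relative commutant} $e_nAe_n\cap B_n'$ rather than into $B_n$: choosing minimal projections $p_{n,i}$ in the blocks of $B_n$, one has $e_nAe_n\cap B_n'\cong\bigoplus_i p_{n,i}Ap_{n,i}$, and each corner $p_{n,i}Ap_{n,i}$ is a unital simple infinite-dimensional C$^*$-algebra of real rank zero. Zhang's dyadic partition-of-unity theorem then produces, in each such corner, a homomorphism $M_k\to p_{n,i}Ap_{n,i}$ whose unit has trace defect at most $2^{-m}k$ uniformly over all traces; summing over $i$ gives $\varphi_n:M_k\to e_nAe_n\cap B_n'$ with $\tau(e_n-\varphi_n(1))$ uniformly small. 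Centrality is then automatic: $\varphi_n(x)$ commutes exactly with $B_n\ni\pi_n(a)\approx e_nae_n$, and $[e_n,a]\to0$. So the essential missing idea in your plan is this passage to the relative commutant, which requires using real rank zero of the corners of $A$ (Zhang's theorem) rather than the internal structure of the finite-dimensional approximants.
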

\begin{proof}
Let $C$ be 
a unital simple infinite-dimensional C$^*$-algebra with real rank zero. 
We first claim that for any $\varepsilon>0$ 
there exists a homomorphism $\varphi:M_k\to C$ such that 
$\tau(1-\varphi(1))<\varepsilon$ for every $\tau\in T(C)$. 
Choose $m\in\N$ so that $k/2^m$ is less than $\varepsilon$. 
By \cite[Theorem 1.1 (i)]{Zha}, 
there exists a partition of unity $1=p_1+p_2+\dots+p_{2^m}+q$ 
consisting of projections in $C$ such that 
$p_1$ is Murray-von Neumann equivalent to $p_i$ for all $i=1,2,\dots,2^m$ 
and $q$ is Murray-von Neumann equivalent to a subprojection of $p_1$. 
There exists a unital homomorphism from $M_{2^m}$ to $(1-q)C(1-q)$ and 
$\tau(q)<2^{-m}$ for any $\tau\in T(C)$. 
It follows that there exists a homomorphism $\varphi:M_k\to C$ such that 
$\tau(1-\varphi(1))\leq2^{-m}(k{-}1)+\tau(q)<2^{-m}k<\varepsilon$. 

We now prove the statement. 
Since $A$ has tracial rank zero, 
there exist a sequence of projections $e_n\in A$, 
a sequence of finite dimensional subalgebras $B_n$ of $A$ with $1_{B_n}=e_n$ 
and a sequence of unital completely positive maps $\pi_n:A\to B_n$ 
such that the following hold. 
\begin{itemize}
\item $\lVert[a,e_n]\rVert\to0$ as $n\to\infty$ for any $a\in A$. 
\item $\lVert\pi_n(a)-e_nae_n\rVert\to0$ as $n\to\infty$ for any $a\in A$. 
\item $\tau(1-e_n)<1/2n$ for all $\tau\in T(A)$. 
\end{itemize}
Choose a family of mutually orthogonal minimal projections 
$p_{n,1},p_{n,2},\dots,p_{n,k_n}$ of $B_n$ 
so that $e_nAe_n\cap B_n'\cong\bigoplus_ip_{n,i}Ap_{n,i}$. 
As $A$ has real rank zero, so does $p_{n,i}Ap_{n,i}$. 
It follows from the claim above that 
we can find a homomorphism $\varphi_{n,i}:M_k\to p_{n,i}Ap_{n,i}$ such that 
$\tau(p_{n,i}-\varphi_{n,i}(1))$ is arbitrarily small for all $\tau\in T(A)$. 
By taking a direct sum of $\varphi_{n,i}$'s, 
we get a homomorphism $\varphi_n:M_k\to e_nAe_n\cap B_n'$ 
such that $\tau(e_n-\varphi_n(1))<1/2n$ for all $\tau\in T(A)$. 
It is easy to see that 
$(\varphi_n(x))_n$ is a central sequence for any $x\in M_k$ 
and $\tau(1-\varphi_n(1))<1/n$ for every $\tau\in T(A)$. 
The proof is completed. 
\end{proof}

\begin{lemma}
Let $A$ be a unital separable simple nuclear infinite-dimensional 
C$^*$-algebra with tracial rank zero. 
Then any completely positive map from $A$ to $A$ can be excised in small central sequences. 
\end{lemma}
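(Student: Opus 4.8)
The plan is to reduce this statement to the already-established implication (ii)$\Rightarrow$(iii) of Theorem \ref{ThmMain}, exactly as was done for $\mathcal{Z}$-absorbing algebras in Theorem \ref{ThmSI}. The proof of (ii)$\Rightarrow$(iii) given in Section \ref{Sec3} uses only two properties of $A$: that $A$ has strict comparison (needed via Proposition \ref{PropMain}, i.e.\ via Lemma \ref{LemBddrn}), and that the conclusion of Lemma \ref{LemOrderzeroCP} holds for $A$ (needed via Lemma \ref{LemDecompositionfn} to split $(f_n)_n$ into $k$ mutually orthogonal pieces with controlled trace values). So it suffices to verify these two inputs for a unital separable simple nuclear infinite-dimensional C$^*$-algebra $A$ with tracial rank zero.

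First I would record that tracial rank zero implies strict comparison: such an algebra has real rank zero, stable rank one, and weakly unperforated ordered $K_0$, and in fact Lin's classification / standard results give that $A$ has strict comparison of positive elements (alternatively, $A$ has tracial rank zero $\Rightarrow$ $A$ is tracially AF $\Rightarrow$ $A$ has strict comparison; this is well known and can be cited). Second, I would upgrade Lemma \ref{TR0>centralMk} to obtain the conclusion of Lemma \ref{LemOrderzeroCP}: Lemma \ref{TR0>centralMk} produces, for each $k$, a sequence of \emph{honest} unital-on-a-large-corner homomorphisms $\varphi_n:M_k\to A$ with $(\varphi_n(x))_n$ central and $\max_\tau\tau(1-\varphi_n(1))\to0$. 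Setting $c_n=\varphi_n(e)$ for a minimal projection $e\in M_k$, the $c_n$ are projections, so $\tau(c_n^m)=\tau(c_n)$ for all $m$, and $\sum_i\varphi_n(e_{ii})=\varphi_n(1)$ gives $k\tau(c_n)=\tau(\varphi_n(1))\to1$ uniformly in $\tau$; hence $\max_\tau|\tau(c_n^m)-1/k|\to0$ for every $m$. The map $M_k\ni x\mapsto(\varphi_n(x))_n\in A_\infty$ is a genuine $*$-homomorphism, in particular a completely positive contractive order zero map, so the full conclusion of Lemma \ref{LemOrderzeroCP} holds.

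With these two facts in hand, the argument is: invoke Lemma \ref{LemDecompositionfn} (which needs only the conclusion of Lemma \ref{LemOrderzeroCP}) and Proposition \ref{PropMain} (which needs only strict comparison and the structural hypotheses on $A$) exactly as in the proof of (ii)$\Rightarrow$(iii) in Section \ref{Sec3}, to conclude that any completely positive map $\varphi:A\to A$ can be excised in small central sequences. Concretely: reduce $\varphi$ via Lemma \ref{LemCPmap} to a sum $\varphi_1+\dots+\varphi_N$ of maps of the form in Proposition \ref{PropMain}; split a given central sequence $(f_n)_n$ into orthogonal pieces $(f_{l,n})_n$ via Lemma \ref{LemDecompositionfn}; apply Proposition \ref{PropMain} to each $(\varphi_l,(e_n)_n,(f_{l,n})_n)$ to get $(s_{l,n})_n$; and set $s_n=\sum_l s_{l,n}$, using orthogonality of the $f_{l,n}$ to kill the cross terms.

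I do not anticipate a genuine obstacle here — the real content of the paper is in Sections \ref{Sec2} and \ref{Sec3} — but the one point requiring care is the verification that tracial rank zero gives strict comparison in the precise sense used in Lemma \ref{LemBddrn} (comparison of positive elements in matrix amplifications via the dimension functions $d_\tau$, over \emph{all} of $T(A)$, not just extremal traces). This follows from the known structure theory (tracially AF algebras have strict comparison; equivalently their Cuntz semigroup is almost unperforated), and once it is cited the rest is a direct application of the machinery already built. In fact the statement only needs the weaker consequence of strict comparison isolated in the Remark following Proposition \ref{PropMain}, which may be the cleanest thing to verify directly.
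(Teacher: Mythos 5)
Your proposal is correct and follows essentially the same route as the paper: the paper likewise cites that tracial rank zero gives strict comparison (via Lin and Perera) and then reruns the proof of (ii)$\Rightarrow$(iii) of Theorem \ref{ThmMain}, substituting Lemma \ref{TR0>centralMk} for Lemma \ref{LemOrderzeroCP}. Your explicit verification that the homomorphisms of Lemma \ref{TR0>centralMk} yield the conclusion of Lemma \ref{LemOrderzeroCP} is exactly the (implicit) substitution the paper makes.
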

\begin{proof}
By \cite[Theorem 3.7.2]{Lin} and \cite[Corollary 3.10]{Perera}, $A$ has strict comparison. 
Then we can prove this lemma 
in the same way as the proof of (ii)$\Rightarrow$(iii) of Theorem 1.1 
(see Section 3), 
by using the lemma above instead of Lemma \ref{LemOrderzeroCP}. 
\end{proof}

\begin{lemma}
Let $A$ be a unital separable simple nuclear infinite-dimensional 
C$^*$-algebra with tracial rank zero. 
Then $A$ has property (SI). 
\end{lemma}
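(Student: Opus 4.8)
The plan is to note that the preceding lemma already supplies condition (iii) of Theorem~\ref{ThmMain} for $A$: every completely positive map from $A$ to $A$, and in particular $\id_A$, can be excised in small central sequences. Granting this, property (SI) follows by repeating the proof of (iii)$\Rightarrow$(iv) of Theorem~\ref{ThmMain} verbatim. The key observation is that that argument never uses the assumption of finitely many extremal traces; it needs only that $A$ is unital and that $\id_A$ can be excised in small central sequences.

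Concretely, I would take central sequences $(e_n)_n$ and $(f_n)_n$ of positive contractions in $A$ with $\lim_{n}\max_{\tau\in T(A)}\tau(e_n)=0$ and $\lim_{m}\liminf_{n}\min_{\tau\in T(A)}\tau(f_n^m)>0$, as in Definition~\ref{DefSI}. Applying excision of $\id_A$ in small central sequences to this pair yields $s_n\in A$ with $\lVert s_n^*as_n-ae_n\rVert\to0$ for every $a\in A$ and $\lVert f_ns_n-s_n\rVert\to0$. Taking $a=1$ and using that $A$ is unital gives $\lVert s_n^*s_n-e_n\rVert\to0$, which together with $\lVert f_ns_n-s_n\rVert\to0$ is exactly what property (SI) demands, \emph{provided} $(s_n)_n$ is a central sequence.

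That last point is the only computation, and it is routine: for $a\in A$ one expands
\[
\lVert[s_n,a]\rVert^2=\lVert a^*s_n^*s_na-a^*s_n^*as_n-s_n^*a^*s_na+s_n^*a^*as_n\rVert,
\]
and applies the excision relation $\lVert s_n^*bs_n-be_n\rVert\to0$ with $b\in\{1,a,a^*,a^*a\}$, together with the centrality of $(e_n)_n$, to see that each of the four summands converges to $\pm\,a^*ae_n$; hence the expression tends to $0$ and $(s_n)_n\in A_\infty$. Since the genuinely substantial ingredients — strict comparison for tracial rank zero algebras via \cite{Lin,Perera}, the central embeddings of $M_k$ from Lemma~\ref{TR0>centralMk}, and the excision results of Section~\ref{Sec2} — are already established, the only thing to be careful about is to invoke the excision property for $\id_A$ at non-positive elements, which is legitimate because Definition~\ref{DefExcision} quantifies over all $a\in A$.
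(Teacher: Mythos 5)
Your proposal is correct and is essentially the paper's own argument: the paper proves this lemma by combining the preceding excision lemma with the proof of (iii)$\Rightarrow$(iv) of Theorem~\ref{ThmMain}, exactly as you do, and that implication indeed uses neither strict comparison nor the finiteness of the extremal trace space. Your explicit verification that $(s_n)_n$ is central matches the computation given in Section~\ref{Sec4}.
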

\begin{proof}
This follows from the lemma above and 
the proof of (iii)$\Rightarrow$(iv) of Theorem 1.1 (see Section 4). 
\end{proof}

\begin{theorem}\label{TR0>appdv}
Let $A$ be a unital separable simple nuclear infinite-dimensional 
C$^*$-algebra with tracial rank zero. 
Then $A$ is approximately divisible. 
In particular, $A$ is $\mathcal{Z}$-absorbing. 
\end{theorem}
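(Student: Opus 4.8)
The plan is to deduce approximate divisibility from property (SI) together with the presence of centrally trivial matrix subalgebras, in essentially the same way that (iv)$\Rightarrow$(i) of Theorem~\ref{ThmMain} was carried out. Recall that a unital C$^*$-algebra $A$ is \emph{approximately divisible} if for every $k\in\N$ there is a unital homomorphism $M_k\oplus M_{k+1}\to A_\infty$. So fix $k\in\N$; I want to produce such a map into $A_\infty$.

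First I would invoke Lemma~\ref{TR0>centralMk} (twice, or once with $k(k+1)$ and then split): it gives, for each $j\in\N$, a sequence $(\varphi_n^{(j)})_n$ of homomorphisms $M_j\to A$ such that $(\varphi_n^{(j)}(x))_n$ is central for every $x\in M_j$ and $\lim_n\max_{\tau\in T(A)}\tau(1-\varphi_n^{(j)}(1))=0$. Applying this with $j=k$ gives central sequences $(c_{i,n})_n$, $i=1,\dots,k$, of matrix-unit images in $A$ with $(c_{i,n}c_{j,n}^*)_n=\delta_{i,j}(c_{1,n}^2)_n$ (here $c_{1,n}$ a projection, so the computation is even cleaner than in Lemma~\ref{LemOrderzeroCP}) and $\lim_n\max_\tau\lvert\tau(c_{1,n}^m)-1/k\rvert=0$ for all $m$, exactly as in the proof of (iv)$\Rightarrow$(i). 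Setting $(e_n)_n=(1-\sum_{i=1}^k c_{i,n}^*c_{i,n})_n$, the same estimates as there show $\limsup_n\max_\tau\tau(e_n)=0$ and $\lim_m\liminf_n\min_\tau\tau(c_{1,n}^m)=1/k>0$. By the preceding lemma ($A$ has property (SI)), there is a central sequence $(s_n)_n$ in $A$ with $(s_n^*s_n+\sum_i c_{i,n}^*c_{i,n})_n=1$ and $(c_{1,n}s_n)_n=(s_n)_n$; thus $\{(c_{i,n})_n\}_{i=1}^k\cup\{(s_n)_n\}$ satisfies relation $\mathcal{R}_k$ of \cite[Section~2]{Sat}, and \cite[Proposition~2.1]{Sat} yields a unital homomorphism $I(k,k+1)\to A_\infty$. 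Since $I(k,k+1)$ contains a unital copy of $M_k\oplus M_{k+1}$ (or, more directly, since the Jiang--Su algebra embeds unitally in $A_\infty$ by \cite[Proposition~2.2]{TW} and $\mathcal Z$ is approximately divisible), we obtain a unital homomorphism $M_k\oplus M_{k+1}\to A_\infty$ for every $k$, i.e.\ $A$ is approximately divisible. Approximate divisibility implies $\mathcal Z$-absorption for separable unital simple nuclear algebras; alternatively, this last implication is already packaged by \cite[Proposition~2.2]{TW} applied as in the proof of (iv)$\Rightarrow$(i) of Theorem~\ref{ThmMain}.

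The only step requiring any care is verifying that the homomorphisms from Lemma~\ref{TR0>centralMk}, whose images individually have unit close to $1$ only in trace, assemble into genuine central sequences of \emph{contractions} with the orthogonality and trace relations needed to run the (SI) argument; but since $\varphi_n^{(k)}$ is a $*$-homomorphism, $(c_{i,n})_n$ automatically has $\|c_{i,n}\|\le 1$, $c_{i,n}c_{j,n}^*=\delta_{i,j}c_{1,n}c_{1,n}^*$ with $c_{1,n}=\varphi_n^{(k)}(e_{11})$ a projection, so $c_{1,n}^m=c_{1,n}$ and the trace estimate is immediate. Hence there is in fact no real obstacle here: the proof is a transcription of (iv)$\Rightarrow$(i), with Lemma~\ref{TR0>centralMk} replacing Lemma~\ref{LemOrderzeroCP} and the two preceding lemmas of this section supplying excision and property (SI).
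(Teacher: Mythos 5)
Your argument establishes $\mathcal{Z}$-absorption correctly (Lemma~\ref{TR0>centralMk} plus property (SI) does let you run the proof of (iv)$\Rightarrow$(i) of Theorem~\ref{ThmMain} verbatim, producing a unital homomorphism $I(k,k+1)\to A_\infty$), but it does not establish approximate divisibility, which is the actual content of the theorem. The step that fails is the passage from $I(k,k+1)$ (or from $\mathcal{Z}$) to $M_k\oplus M_{k+1}$: the prime dimension drop algebra $I(k,k+1)$ has no nontrivial projections, so it contains no unital copy of $M_k\oplus M_{k+1}$ (a unital copy of any nontrivial direct sum, or of any $M_j$ with $j\geq 2$, would require nontrivial projections). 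For the same reason your fallback is false as well: $\mathcal{Z}$ is projectionless, hence is \emph{not} approximately divisible, and a unital embedding $\mathcal{Z}\hookrightarrow A_\infty$ gives no unital matricial subalgebras of $A_\infty$ at all. Indeed, approximate divisibility is strictly stronger than $\mathcal{Z}$-absorption (with $\mathcal{Z}$ itself as the standard counterexample), so no argument that factors through $\mathcal{Z}$-absorption alone can prove the theorem; this is precisely why the statement is of interest for tracial rank zero algebras.

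The missing idea is to exploit that Lemma~\ref{TR0>centralMk} gives genuine $*$-homomorphisms, i.e.\ honest projections, rather than only the order zero data of Lemma~\ref{LemOrderzeroCP}. By \cite[Proposition~2.7]{BKR} it suffices to produce one unital homomorphism $M_2\oplus M_3\to A_\infty$. Take homomorphisms $\varphi_n:M_2\to A$ as in Lemma~\ref{TR0>centralMk}, put $e_n=1-\varphi_n(1)$ and $f_n=\varphi_n(e_{11})$, and apply property (SI) (available by the preceding lemma) to get a central sequence $(s_n)_n$ with $\lVert s_n^*s_n-(1-\varphi_n(1))\rVert\to0$ and $\lVert\varphi_n(e_{11})s_n-s_n\rVert\to0$. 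In $A_\infty$ this says: with $p_1=\varphi(e_{11})$, $p_2=\varphi(e_{22})$, $q=1-p_1-p_2$, the element $s$ is a partial isometry with $s^*s=q$ and $ss^*\leq p_1$, so $q\precsim p_1\sim p_2$. Writing $p_1=ss^*+r$ and transporting this decomposition to $p_2$ by $\varphi(e_{21})$, one obtains a partition of $1$ into two mutually equivalent projections together with three mutually equivalent projections, i.e.\ a unital copy of $M_2\oplus M_3$ in $A_\infty$. Then \cite[Proposition~2.7]{BKR} gives approximate divisibility, and \cite[Theorem~2.3]{TW} (not Proposition~2.2, which is the dimension drop criterion you used) yields $A\otimes\mathcal{Z}\cong A$. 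In short: your route through relation $\mathcal{R}_k$ and \cite[Proposition~2.1]{Sat} throws away the projections that tracial rank zero provides, and those projections are exactly what approximate divisibility requires.
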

\begin{proof}
In order to prove that $A$ is approximately divisible, 
it suffices to construct 
a unital homomorphism from $M_2\oplus M_3$ to $A_\infty$ 
(\cite[Proposition 2.7]{BKR}). 
By Lemma \ref{TR0>centralMk}, 
there exists a sequence $(\varphi_n)_n$ of homomorphisms 
from $M_2$ to $A$ such that 
$(\varphi_n(x))_n$ is a central sequence for any $x\in M_2$ and 
\[
\lim_{n\to\infty}\max_{\tau\in T(A)}\tau(1-\varphi_n(1))=0. 
\]
By the lemma above, $A$ has property (SI). 
It follows that 
there exists a central sequence $(s_n)_n$ such that 
\[
\lim_{n\to\infty}\lVert s_n^*s_n-(1-\varphi_n(1))\rVert=0,\quad 
\lim_{n\to\infty}\lVert\varphi_n(e_{11})s_n-s_n\rVert=0, 
\]
where $e_{11}\in M_2$ is a rank one projection in $M_2$. 
Hence there exists a unital homomorphism from $M_2\oplus M_3$ to $A_\infty$. 
Thus, $A$ is approximately divisible. 
By \cite[Theorem 2.3]{TW}, 
a unital separable approximately divisible C$^*$-algebra is 
$\mathcal{Z}$-absorbing. 
The proof is completed. 
\end{proof}

\end{document}